\numberwithin{equation}{section}
\newcommand{\R}{\mathbb{R}}
\newcommand{\N}{\mathbb{N}}
\newtheorem{con}{Condition}[section]
\newtheorem{tm}{Theorem}[section]
\newtheorem{df}{Definition}
\newtheorem{lm}{Lemma}
\newtheorem{rk}{Remark}
\newtheorem{ap}{Assumption}
\begin{document}
\title{High order conformal symplectic and ergodic schemes for stochastic Langevin equation via generating functions}
       \author{
        Jialin Hong\footnotemark[2], Liying Sun\footnotemark[3], and Xu Wang\footnotemark[1]\\
       {\small \footnotemark[2]~\footnotemark[3]~\footnotemark[1] Institute of Computational Mathematics and Scientific/Engineering Computing,}\\{\small Academy of Mathematics and Systems Science, Chinese Academy of Sciences, }\\
         {\small Beijing 100190, P.R.China }}
       \maketitle
       \footnotetext{\footnotemark[2]\footnotemark[3]\footnotemark[1]Authors are supported by National Natural Science Foundation of China (NO. 91130003, NO. 11021101 and NO. 11290142).}
        \footnotetext{\footnotemark[1]Corresponding author: liyingsun@lsec.cc.ac.cn}

       \begin{abstract}
          {\rm\small In this paper, we consider the stochastic Langevin equation with additive noises, which possesses both conformal symplectic geometric structure and ergodicity.
          We propose a methodology of constructing high weak order conformal symplectic schemes by converting the equation into an equivalent autonomous stochastic Hamiltonian system and modifying the associated generating function. To illustrate this approach, we construct a specific second order numerical scheme, and prove that its symplectic form dissipates exponentially. Moreover, for the linear case, the proposed scheme is also shown to inherit the ergodicity of the original system, and the temporal average of the numerical solution is a proper approximation of the ergodic limit over long time.
           Numerical experiments are given to verify these theoretical results.}\\

\textbf{AMS subject classification: }{\rm\small60H08, 60H35, 65C30.}\\

\textbf{Key Words: }{\rm\small}stochastic Langevin equation, conformal symplectic scheme, generating function, ergodicity, weak convergence
\end{abstract}

\section{Introduction}
\label{int}

To describe dissipative systems which have interactions with an environment more clearly and specifically, especially in the fields of molecular simulations, quantum systems, cell migrations, chemical interactions, electrical engineering and finances (see \cite{Coffey2012The,gillespie2000the,Schienbein1993Langevin} and references therein), one common way is by means of the stochastic Langevin equation. The stochastic Langevin equation, considered in this paper, is a dissipative Hamiltonian system,  whose phase flow preserves the conformal symplectic geometric structure (\cite{Bourabee2008}) as an extension of the deterministic case. Namely, its symplectic form dissipates exponentially. One can also show that the considered stochastic Langevin equation is ergodic (\cite{MSH02,TD2002,MST10}) with a unique invariant measure, i.e., Boltzmann--Gibbs measure (\cite{Bourabee2008,CPHEVG07}). This dynamical behavior implies that the temporal average of the solution will converge to its spatial average, which is also known as the ergodic limit, with respect to the invariant measure over long time.
 
To approximate the exact solution more accurately and characterize both the geometric structure and the dynamical behavior numerically, this work is developed to propose an approach for constructing high weak order conformal symplectic schemes,
and illustrate this approach by a specific case. We show that the proposed scheme for this particular case inherits the ergodicity of the original system with a unique invariant measure. The weak convergence error, as well as the approximate error of the ergodic limit, is proved to be of order two.

There has been several works concentrating on the construction of numerical schemes for stochastic Langevin equation, mainly based on the splitting technique. For instance, \cite{Bourabee2008} constructs a class of the conformal symplectic integrators to preserve the conformal symplectic structure, and \cite{MilTre2003,MilTre2007} propose the quasi-symplectic methods which can degenerate into symplectic ones when the system degenerates into a stochastic Hamiltonian system. The convergence rate of these schemes depends heavily on the splitting forms.
As for the ergodicity, its numerical analysis essentially follows two directions at our knowledge. The first one is to construct numerical schemes to inherit the ergodicity (see e.g. \cite{MSH02,TD2002}), and gives the error between the numerical invariant measure and the original one (see e.g. \cite{CHW16,BO10}). The other one is to approximate the ergodic limit with respect to the original invariant measure via the numerical temporal averages for some empirical test functions (see e.g. \cite{MST10,MilTre2007,leimkuhler2013IMA}). In the latter case, the numerical solutions may not be ergodic.

In this paper, for the considered stochastic Langevin equation, we aim to construct numerical schemes which are of high weak order and conformal symplectic. To achieve these purposes without bringing the complexity of the high order splitting technique, we introduce a transformation from the stochastic Langevin equation into an autonomous stochastic Hamiltonian system. It then suffices to construct high order symplectic schemes for the autonomous Hamiltonian system, which turns out to be conformal symplectic schemes of the original system based on the inverse transformation of the phase spaces. 
To get high weak order schemes, a powerful tool is the modified equations. For example, \cite{Abdulle} constructs high order stochastic numerical integrators for general stochastic differential equations, but these schemes may not be symplectic when applied to the Hamiltonian systems. Based on the internal properties of the Hamiltonian systems, \cite{Anton1} proposes the method of constructing high weak order stochastic symplectic schemes with multiple stochastic It\^{o} integrals, using truncated generating functions. Based on these schemes, \cite{WHS16} gives their associated modified equations via generating functions.
To reduce the simulation of multiple integrals and still get high weak order symplectic schemes, inspired by \cite{Anton1,WHS16,Abdulle}, we modify the generating function for the equivalent stochastic Hamiltonian system and get associated symplectic numerical methods by truncating modified generating functions. We would like to mention that this kind of methods could avoid simulating too many multiple stochastic It\^{o} integrals, but the products of the increments of Wiener processes instead. This approach is illustrated with the construction of a stochastic numerical scheme which is of weak order two. For the proposed numerical scheme, both the phase volume and symplectic form dissipate exponentially, which coincide with those of the original stochastic Langevin equation. Furthermore, the proposed scheme, similar to the original system, is proved to possess a numerical invariant measure, and the invariant measure is unique for the linear case, which implies the ergodicity of the numerical solution. 
Finally, we verify that both the weak convergence error of the numerical scheme and the error of ergodic limit are of order two. 

An outline of this paper is as follows. Section 2 gives a review of some basic properties of the stochastic Langevin equation, as well as the generating function of the stochastic Hamiltonian system, and also the transformation between the stochastic Langevin equation and an autonomous stochastic Hamiltonian system. In Section 3, a weakly convergent conformal numerical scheme, which possesses an invariant measure, is proposed by means of modified generating functions and the transformation of phase space. In Section 4, we show that both the weak convergence rate of the proposed scheme and the approximate error of the ergodic limit are of order two, based on the uniform estimate of the numerical solutions. At last, we give some numerical tests to verify the theoretical results in Section 5.

\section{Stochastic Langevin equations}
\label{sec2}Let $(\Omega, \mathcal{F}, \mathbb{P})$ be a probability space, $\mathcal{F}_t$ be the filtration for $t\geq 0$ and $W(t)=\big(W_1(t),\cdots, W_m(t)\big)^\top$ be an $m$-dimensional standard Wiener process associated to $\{\mathcal{F}_t\}_{t\geq 0}.$ 
Denote the 2-norm for both matrices and vectors by $\|\cdot\|$ and the determinant of matrices by $\left|\cdot\right|,$ and use $C$ as generic constants independent of $h$ which may be different from line to line.

\subsection{Stochastic conformal symplectic structure and ergodicity}
\label{subsec:ham}
In this section, we focus on stochastic Langevin equation driven by additive noises with deterministic initial values $P(0)=p\in \mathbb{R}^d$ and $Q(0)=q\in \mathbb{R}^d,$ of the following form
\begin{equation}
\begin{split}
\label{1.1}
&dP=-f(Q)dt-vPdt-\sum\limits_{r=1}^{m}\sigma _rdW_r(t),\\
&dQ=MPdt,\quad t\in[0,T],
\end{split}
\end{equation}
where $f\in C^{\infty}(\mathbb{R}^{d},\mathbb{R}^{d}),$ $M\in \mathbb{R}^{d\times d}$ is a positive definite symmetric matrix, $v>0$ is the absorption coefficient and $\sigma_r\in\mathbb{R}^{d}$ with $r\in\{1,\cdots,m\},$ $m\ge d$ and ${\rm rank}\{\sigma_1,\cdots,\sigma_m\}=d$.
In addition, assume that there exists a scalar function $F\in C^{\infty}(\mathbb{R}^{d},\mathbb{R})$ satisfying $$f_i(Q)=\frac{\partial F(Q)}{\partial Q_i},\quad i=1\cdots,d.$$ To simplify the notation, we will remove any mention of the dependence on $\omega\in \Omega$ unless it is absolutely necessary to avoid confusions. Note that (\ref{1.1}) 
holds $\mathbb{P}$-a.s, as well as other stochastic differential equations (SDEs) in the sequel.
It is well known that if $v=0,$ (\ref{1.1}) turns out to be a separable stochastic Hamiltonian system (SHS) which possesses stochastic symplectic structure and phase volume preservation 
(\cite{mil1}). However, 
when $v>0,$ the symplectic form of (\ref{1.1}) dissipates exponentially
\begin{equation*}
dP(t)\wedge dQ(t)=e^{-vt}dp\wedge dq,\quad \forall t\geq 0,
\end{equation*}
which characterizes the longtime tracking of the solutions to (\ref{1.1}), so as the phase volume ${\rm Vol}(t).$
Namely, denote by $D_t=D_t(\omega)\subset \mathbb{R}^{2d}$  
a random domain which has finite volume and is independent of Wiener processes $W(t)$ with respect to the system (\ref{1.1}), one can obtain 
\begin{equation*}
\begin{split}
{\rm Vol}(t)&=\int_{D_t}dP^1\cdots dP^{d}dQ^1\cdots dQ^d\\
&=\int_{D_0}\left|\frac{D(P^1,\cdots,P^{d},Q^1,\cdots,Q^d)}{D(p^1,\cdots,p^{d},q^1,\cdots,q^d)}\right|dp^1\cdots dp^{d}dq^1\cdots dq^d,
\end{split}
\end{equation*}
where the determinant of Jacobian matrix
$\left|\frac{D(P^1,\cdots,P^{d},Q^1,\cdots,Q^d)}{D(p^1,\cdots,p^{d},q^1,\cdots,q^d)}\right|=e^{-vtd}$ with $d$ being the dimension (\cite{mil2,mil1}). 

As another well-known longtime behavior, the ergodicity of \eqref{1.1} is shown in \cite{MSH02} by proving that \eqref{1.1} possesses a unique invariant measure $\mu$. 
Noticing that \eqref{1.1} satisfies the hypoelliptic setting  
\begin{align}\label{hypoelliptic}
{\rm span}\{U_i,[U_0,U_j],i=0,\cdots,m,j=1,\cdots,m\}=\mathbb{R}
^{2d}
\end{align}
with vector fields $U_0=((-f(Q)-vP)^{\top},(MP)^{\top})^{\top}$ and $U_j=(\sigma_j^{\top},0)^{\top}$, $j=1,\cdots,m$, which together with the following assumption yields the ergodicity of \eqref{1.1}.
\begin{ap}
\label{ap1}
Let $F\in C^{\infty}(\R^d,\R)$ satisfy that
\begin{enumerate}[(i)]
  \item $F(q)\ge0$ for all $q\in\R^d$;
  \item there exist $\alpha>0$ and $\beta\in(0,1)$ such that 
  $$\frac12q^{\top}f(q)\ge\beta F(q)+v^2\frac{\beta(2-\beta)}{8(1-\beta)}\|q\|^2-\alpha.$$ 
\end{enumerate}
\end{ap}
Intuitively speaking, the ergodicity of \eqref{1.1} reads that the temporal averages of $P(t)$ and $Q(t)$ starting from different initial values will converge almost everywhere to its spatial average with respect to the invariant measure $\mu.$ More precisely,
\begin{align}\label{ergodiclimit}
\lim_{T\to\infty}\frac1T\int_0^T\mathbf{E}^{(p,q)}\left[\psi(P(t),Q(t))\right]dt=\int_{\mathbb{R}^{2d}}\psi d\mu,\quad\forall~\psi\in C_b(\mathbb{R}^{2d},\mathbb{R})
\end{align}
in $L^2(\mathbb{R}^{2d},\mu)$, where $\mathbf{E}^{(p,q)}[\cdot]$ denotes the expectation starting from $P(0)=p$ and $Q(0)=q$.

Next, we tend to convert \eqref{1.1} into an equivalent homogenous SHS via a transformation of phase space, such that one can construct conformal symplectic schemes for \eqref{1.1} based on symplectic schemes of the homogenous SHS. 
To this end, denoting $X_i(t)= e^{vt}P_i(t)$ and $Y_i(t)=Q_i(t)$ and using It\^o's formula to $X_i(t)$ and $Y_i(t)$ for $i=1,\cdots,d,$ one can rewrite (\ref{1.1}) as
\begin{equation}
\begin{split}
\label{1.1.1}
&dX_i=
-e^{vt}f_i(Y_1,\cdots,Y_d)dt-e^{vt}\sum\limits_{r=1}^m \sigma_rdW_r(t),\quad dY_i=e^{-vt}\sum\limits_{j=1}^dM_{ij}X_jdt
\end{split}
\end{equation}
with $X_i(0)=p_i$ and $Y_i(0)=q_i.$ It is obvious that (\ref{1.1.1}) is a non-autonomous SHS with time-dependent Hamiltonian functions 
\begin{align*}
\tilde H_0=e^{vt}F(Y_1,\cdots,Y_d)+\frac{1}{2}e^{-vt}\sum\limits_{i,j=1}^dX_iM_{ij}X_j,\quad \tilde H_r=e^{vt}\sum\limits_{i=1}^d\sigma_r^i Y_i.
\end{align*}
To obtain an autonomous SHSs we introduce two new variables $X_{d+1}\in \mathbb{R}$ and $Y_{d+1}\in \mathbb{R}$ as the $(d+1)$-th components of $X$ and $Y$, respectively, satisfying
\begin{align*}
dY_{d+1}=dt, \quad dX_{d+1}=-\frac{\partial \tilde H_{0}}{\partial t}dt-\sum\limits_{r=1}^{m}\frac{\partial \tilde H_{r}}{\partial t}\circ dW_r(t)
\end{align*}
with $Y_{d+1}(0)=0$ and $X_{d+1}(0)=F(q_1,\cdots,q_d)+\frac{1}{2}\sum\limits_{i,j=1}^dp_iM_{ij}p_j+\sum\limits_{r=1}^m\sum\limits_{i=1}^d\sigma_r^iq_i.$ Then (\ref{1.1.1}) becomes a $(2d+2)$-dimensional autonomous SHS
\begin{equation}
\begin{split}
\label{1.1.4.1}
&dX=-\frac{\partial H_0}{\partial Y}dt-\sum\limits_{r=1}^m\frac{\partial H_r}{\partial Y}\circ dW_r(t),\quad dY=\frac{\partial H_0}{\partial X}dt+\sum\limits_{r=1}^m\frac{\partial H_r}{\partial X}\circ dW_r(t),
\end{split}
\end{equation}
with $X(0)=(X_1(0),\cdots,X_{d+1}(0))\in \mathbb{R}^{d+1}$, $Y(0)=(Y_1(0),\cdots,Y_{d+1}(0))\in \mathbb{R}^{d+1}$ and new Hamiltonian functions
\begin{align*}
H_0(X,Y)&=e^{vY_{d+1}}F(Y_1,\cdots,Y_d)+\frac{1}{2}e^{-vY_{d+1}}\sum\limits_{i,j=1}^dX_iM_{ij}X_j+X_{d+1},\\
H_r(X,Y)&=e^{vY_{d+1}}\sum\limits_{i=1}^d\sigma_r^i Y_i.
\end{align*}
Here, (\ref{1.1.4.1}) is called the associated autonomous SHS of (\ref{1.1}), and its phase flow preserves the stochastic symplectic structure.  Notice that the motion of the system can be described by different kinds of generating functions (see \cite{Anton1,WH2014} and references therein). We
only consider the first kind of generating function $S$ in this article. 
\subsection{Generating functions} 
\label{subsec:GF}
For convenience, we denote 
$X(0)=x$ and $Y(0)=y$.
It is revealed in \cite{wang} that the generating function $S(X,y,t)$ related to (\ref{1.1.4.1}) is the solution of the following stochastic Hamilton-Jacobi partial differential equation
\begin{align}
\label{1.1.7}
d_{t}S(X,y,t)=H_0(X,y+\frac{\partial S}{\partial X})dt+\sum_{r=1}^{m}H_r(X,y+\frac{\partial S}{\partial X})\circ dW_r(t).
\end{align}
Moreover, 
the mapping $(x,y)\mapsto(X(t),Y(t))$ defined by
\begin{align}
\label{1.1.6}
X(t)=x-\frac{\partial S(X(t),y,t)}{\partial y},\quad Y(t)=y+\frac{\partial S(X(t),y,t)}{\partial X}
\end{align}
is the stochastic flow of (\ref{1.1.4.1}).
Based on It\^o representation theorem and stochastic Taylor-Stratonovich expansion, $S(X,y,t)$ has a series expansion (see e.g. \cite{Anton1,Anton2})
\begin{align}
\label{1.1.8}
S(X,y,t)=\sum_{\alpha}G_{\alpha}(X,y)J^t_{\alpha},
\end{align}
where 
\begin{align*}
J^t_{\alpha}=\int_0^t\int_0^{s_l}\cdots\int_0 ^{s_2}\circ dW_{j_1}(s_1)\circ dW_{j_2}(s_2)\circ\cdots\circ dW_{j_l}(s_l)
\end{align*}
with multi-index $\alpha=(j_1,j_2,\cdots,j_l),$ $j_i\in\{0,1,\cdots,m\},$ $i=1,\cdots,l,$ $l\geq 1$ and $dW_{0}(s):=ds$. To calculate coefficients $G_{\alpha}(X,y)$ in (\ref{1.1.8}), we first give some notations. Let $l(\alpha)$ denote the length of $\alpha$, and $\alpha-$ be the multi-index resulted from discarding the last index of $\alpha$. Define $\alpha\ast \alpha'=(j_1,\cdots,j_l,j_1',\cdots,j_{l'}')$ where $\alpha=(j_1,\cdots,j_l)$ and $\alpha'=(j_1',\cdots,j_{l'}').$ The concatenation `$\ast$' between a set of multi-indices $\Lambda$ and $\alpha$ is $\Lambda\ast\alpha=\{\beta*\alpha|\beta\in \Lambda\}$. Furthermore, define
\begin{equation*}
\Lambda_{ \alpha, \alpha'}=\left\{
\begin{split}
\{(j_1,j_1'),(j_1',j_1)\},\quad &\mbox{if}\quad l=l'=1,\\ 
\{\Lambda _{(j_1),\alpha'- }*(j_{l'}'),\alpha'\ast (j_1)\},\quad &\mbox{if}\quad l=1, l'\neq 1,\\ 
\{\Lambda _{\alpha-,(j_1')}\ast(j_{l}),\alpha\ast (j_{1}')\},\quad &\mbox{if}\quad l\neq1, l'=1,\\ 
\{\Lambda_{\alpha-,\alpha'}\ast (j_l),\Lambda_{\alpha,\alpha'-}\ast (j_{l'}')\},\quad &\mbox{if}\quad l\neq 1, l'\neq 1.\end{split}\right.
\end{equation*}
For $k>2$, let $\Lambda_{\alpha_1,\cdots, \alpha_k}=\{\Lambda_{\beta, \alpha_k}|\beta\in\Lambda_{\alpha_1,\cdots, \alpha_{k-1}}\}.$  We refer to \cite{Anton1} for more details about these notations. Substituting (\ref{1.1.8}) into (\ref{1.1.7}) and taking Taylor expansions to $H_r$ $(r=0,1,\cdots,m)$ at $(X,y)$, we obtain $G_\alpha^1=H_r$ with $\alpha=(r)$ and
\begin{align*}
G_{\alpha}=\sum_{i=1}^{l(\alpha)-1}\frac{1}{i!}
\sum_{k_1,\cdots,k_i=1}^{d+1}\frac{\partial^i H_{j_l}(X,y)}{\partial y_{k_1}\cdots\partial y_{k_i}}
\scriptsize{
\sum_{
\begin{array}{c}
l(\alpha_1)+\cdots+l(\alpha_i)=l(\alpha)-1\\
\alpha-\in\Lambda_{ \alpha_1,\cdots,\alpha_i}
\end{array}{}
}
}
\frac{\partial G_{\alpha_1}}{\partial X_{k_{1}}}\cdots\frac{\partial G_{\alpha_i}}{\partial X_{k_i}}
\end{align*} 
for $\alpha=(j_1,j_2,\cdots,j_l)$ with $l\geq 2$ (see e.g. \cite{Anton1,Anton2}). According to the expression of $G_{\alpha},$ we have $G_{(j_1,j_2)}=\sum\limits_{i=1}^{d+1}\frac{\partial H_{j_2}}{\partial y_i}\frac{\partial H_{j_1}}{\partial X_i}$ and
\begin{align*}
G_{(j_1,j_2,j_3)}=\sum\limits_{i=1}^{d+1}\frac{\partial H_{j_3}}{\partial y_i}\frac{\partial G_{(j_1,j_2)}}{\partial X_i}+\frac{1}{2}\sum\limits_{i,j=1}^{d+1}\frac{\partial^2 H_{j_3}}{\partial y_i\partial y_j}\left(\frac{\partial H_{j_1}}{\partial X_i}\frac{\partial H_{j_2}}{\partial X_j}+\frac{\partial H_{j_2}}{\partial X_i}\frac{\partial H_{j_1}}{\partial X_j}\right).
\end{align*}
Let $C_1:=e^{vy_{d+1}}$ and $C_2:=e^{-vy_{d+1}}.$ Here $y_{d+1}$ denotes the $(d+1)$-th component of $y.$ Note that $y$ is the initial point of the considered interval, that is, if we consider the problem on the interval $[s,t],$ then $y=Y(s).$ For $r_1,r_2,r_3\in\{1,\cdots,m\},$ we have
\begin{equation*}
\begin{split}
\label{2.3.1}
&G_{(r_1,r_2)}=G_{(r_1,0)}=G_{(r_1,r_2,r_3)}=G_{(r_1,r_2,0)}=G_{(r_1,0,r_2)}=0,\\
&G_{(0,r_1)}=\sum\limits_{i,j=1}^{d}\sigma_{r_1}^{i}M_{ij}X_j+vC_1\sum\limits_{i=1}^{d}\sigma_{r_1}^{i}q_{i},\quad G_{(0,r_1,r_2)}=C_1\sigma_{r_1}^{\top} M\sigma_{r_2},\\
&G_{(0,0)}=\sum\limits_{i,j=1}^{d}f_i(y)M_{ij}X_j+vC_1F(y)-\frac{1}{2}vC_2\sum_{i,j=1}^{d}X_iM_{ij}X_j.
\end{split}
\end{equation*}
For a fixed small time step $h,$ using (\ref{1.1.8}) and taking Taylor expansion to $\frac{\partial S}{\partial y_i}:=\frac{\partial S}{\partial y_i}(X,y,h)$ and $\frac{\partial S}{\partial X_i}:=\frac{\partial S}{\partial X_i}(X,y,h)$ at point $(x,y,h)$ for $i=1,\cdots,d,$ we obtain
\begin{align*}
\frac{\partial S}{\partial y_i}=&C_1\left[\sum\limits_{r=1}^{m}\sigma_r^i(J^h_{(r)}+vJ^h_{(0,r)})+f_i(y)\left(h+\frac{vh^2}{2}\right)\right]+\frac{h^2}{2}\sum\limits_{j,k=1}^{d}\frac{\partial^2 F(y)}{\partial y_i\partial y_j}M_{jk}x_k+R_1,\\
\frac{\partial S}{\partial X_i}=&C_2\sum\limits_{j=1}^{d}M_{ij}x_j\left(h-\frac{vh^2}{2}\right)-\sum\limits_{j=1}^{d}\sum\limits_{r=1}^{m}M_{ij}\sigma_r^jJ^h_{(r,0)}-\frac{h^2}{2}\sum\limits_{j=1}^{d}M_{ij}f_j(y)+R_2,
\end{align*}
where every term in $R_1$ and $R_2$ contains the product of multiply stochastic integrals whose the lowest order is at least $\frac{5}{2}$ and so are the remainder terms $R_l$ with $l=3,\cdots,7$ in the sequel.
Furthermore, $\frac{\partial S}{\partial X_{d+1}}(X,y,h)=h$ and
\begin{align*}
\frac{\partial S}{\partial y_{d+1}}=
&vh\bigg{(}C_1F(y)-\frac{C_2}{2}\sum\limits_{i,j=1}^d x_iM_{ij}x_j\bigg{)}\Big{(}1+\frac{vh}{2}\Big{)}
+vC_1\sum\limits_{r=1}^m\sum\limits_{i=1}^d\sigma_r^iy_i(J^h_{(r)}+vJ^h_{(0,r)})\\
&+\sum\limits_{i,j=1}^d\sum\limits_{r=1}^mv\sigma_r^iM_{ij}x_jhJ^h_{(r)}
+vC_1\sum\limits_{r_1,r_2=1}^m\sigma_{r_1}^{\top} M\sigma_{r_2}J^h_{(0,r_1,r_2)}\\
&+v\sum\limits_{i,j=1}^d\left(C_2\frac{\partial F(y)}{\partial y_i}M_{ij}x_jh^2-\frac{1}{2}C_1\sum\limits_{r_1,r_2=1}^m\sigma_{r_1}^iM_{ij}\sigma_{r_2}^jhJ^h_{(r_1)}J^h_{(r_2)}\right)+R_3,
\end{align*}
where $\frac{\partial S}{\partial y_{d+1}}$ takes value at $(X,y,h).$

By truncating the generating function, the weakly convergent stochastic symplectic numerical schemes have been proposed by several authors (see e.g \cite{Anton1,wang,mil1}).
In these approaches, some techniques are applied to simulate the multiple integrals in the truncated generating functions, and obtain high weak order schemes.
To reduce the simulation of multiple integrals, we introduce a modified generating function to construct more concise symplectic schemes in Section \ref{sec3}, such that conformal symplectic and ergodic schemes for stochastic dynamical systems (\ref{1.1}) are deduced by using the transformation of the phase space.

\section{High order conformal symplectic and ergodic schemes}
\label{sec3}
To construct high order symplectic numerical integrators for \eqref{1.1.4.1}, we modify the  stochastic Hamiltonian functions first. Namely,  we consider the following $(2d+2)$-dimensional stochastic Hamiltonian system
\begin{equation}
\begin{split}
\label{3.1}
&dX^{M}=-\frac{\partial H^{M}_0(X^{M},Y^{M})}{\partial Y ^{M}}dt-\sum\limits_{r=1}^{m}\frac{\partial H^{M}_r(X^{M},Y^{M})}{\partial Y ^{M}}\circ dW_r(t),\quad X^{M}(0)=x, \\
&dY^{M}=\frac{\partial H^{M}_0(X^M,Y^M)}{\partial X^M}dt+\sum\limits_{r=1}^{m}\frac{\partial H^{M}_r(X^{M},Y^{M})}{\partial X^M}\circ dW_r(t),\quad Y^{M}(0)=y,
\end{split}
\end{equation}
where 
\begin{equation}
\begin{split}
\label{3.2}
&H^{M}_0(X^{M},Y^{M})={H}_0(X^{M},Y^{M})+{H}_0^{[1]}(X^{M},Y^{M})h+\cdots+{H}_0^{[\tau]}(X^{M},Y^{M})h^{\tau},\\
&H^{M}_r(X^{M},Y^{M})={H}_r(X^{M},Y^{M})+{H}_r^{[1]}(X^{M},Y^{M})h+\cdots+{H}_r^{[\tau]}(X^{M},Y^{M})h^{\tau}
\end{split}
\end{equation}
with functions $H_{i}^{[j]},$ $i=0,\cdots\,r,$ $j=1,\cdots,{\tau},$ $\tau\in\mathbb{N}_{+}$ to be determined. Meanwhile, according to the definition of $G_\alpha$ in Subsection \ref{subsec:GF}, we get  the associated generating function of (\ref{3.1}), which is called the modified generating function of \eqref{1.1.4.1}.
Our goal is to choose undetermined functions in (\ref{3.2}) such that the proposed weakly convergent symplectic numerical approximation is `$k'$ order closer' to the solution of (\ref{1.1.4.1}) than to the solution of (\ref{3.1}).

Now we first give a symplectic numerical approximation to \eqref{3.1} via its generating function, such that this scheme shows weak order $k$ for \eqref{3.1} without specific choices of $H_i^{[j]}$ (see \cite{Anton1} and references therein). In detail, we replace the multiple Stratonovich integrals $J^t_{\alpha}$ in the modified  generating function by an equivalent linear combination of multiple It\^{o} integrals 
\begin{align*}
\label{2.18}
I^t_{\alpha}:=\int_0 ^t\int_0 ^{s_l}\cdots\int_0 ^{s_2}dW_{j_1}(s_1) dW_{j_2}(s_2)\cdots dW_{j_l}(s_l),
\end{align*}
based on the relation
\begin{equation*}
J_{\alpha}^t=\left\{
\begin{split}
&\sum\limits_{\beta}C_\alpha^\beta I_{\beta}^t,\quad l(\alpha)\geq 2,\\
&I_{\alpha}^t,\quad l(\alpha)=1,
\end{split}\right.
\end{equation*}
where $C_\alpha^\beta$ are certain constants which can be found in \cite{kloeden}. 
Denote by 
\begin{align}
\label{SG}
S^{G}(X^G,y,t)=\sum_{\alpha}G^G_{\alpha}( X^G,y)\sum\limits_{l(\beta)\leq k }C_\alpha^\beta I^t_{\beta},
\end{align}
the truncated modified generating function (see e.g. \cite{Anton1,Anton2,kloeden}), where 
\begin{align*}
G_{\alpha}^G=\sum_{i=1}^{l(\alpha)-1}\frac{1}{i!}\sum_{k_1,\cdots,k_i=1}^{d+1}\frac{\partial^i H^{M}_{j_l}(X^G,y)}{\partial y_{k_1}\cdots\partial y_{k_i}}
\scriptsize{
\sum_{
\begin{array}{c}l(\alpha_1)+\cdots+l(\alpha_i)=l(\alpha)-1\\ \alpha-\in\Lambda_{ \alpha_1,\cdots,\alpha_i}
\end{array}
}
}
\frac{\partial G_{\alpha_{1}
}^G}{\partial X_{k_{1}}^G}\cdots\frac{\partial G_{\alpha_i}^G}{\partial X_{k_i}^G}
\end{align*}
for $l(\alpha)\geq 2$, and $G_{(r)}^G=H_r^M$ for $r=0,1,\cdots,m.$ Then we get the following one-step approximation 
\begin{align}
\label{3.5}
X^G=x-\frac{\partial S^G(X^G,y,h)}{\partial y},\quad Y^G=y+\frac{\partial S^G(X^G,y,h)}{\partial X^G},
\end{align}
which preserves symplectic structure and is of weak order $k$ for \eqref{3.1}. Notice that the truncated modified generating function contains undetermined functions $H_{i}^{[j]},$ $i=0,\cdots\,r,$ $j=1,\cdots,{\tau}$ in (\ref{3.2}). To get high weak order symplectic scheme, we need to determine all the $H_{i}^{[j]}$ such that the numerical scheme based on (\ref{3.5}) satisfying
\begin{align}
\label{3.6}
|\mathbf{E}\phi(X(h),Y(h))-\mathbf{E}\phi(X^G,Y^G)|=O(h^{k+k'+1})
\end{align}
for all $\kappa$ times continuously differentiable functions $\phi\in C_P^{\kappa}(\mathbb{R}^{2d+2},\mathbb{R})$ with polynomial growth, that is, the numerical scheme based on (\ref{3.5}) is of weak order $k+k'$ for 
(\ref{1.1.4.1}). 
Conditions on kappa will be given in the following. The detailed approach of choosing the undetermined functions will be illustrated with  the case $k=k'=1$ in next section. We would like to mention that the procedure for constructing conformal symplectic schemes is also available for larger $k$ and $k'.$

\subsection{Numerical schemes via modified generating function}
\label{subsec:MGF}

For $k=k'=1,$ it is sufficient to consider $\tau=1$ in (\ref{3.2}). Based on the fact that $G_{(r)}^G=H_r^M$ for $i=0,1,\cdots,m,$ we rewrite the truncated generating function (\ref{SG}) as
\begin{align}
\label{SG1}
S^{G}(X^G,y,h)=\left(H^{M}_0(X^G,y)+\frac{1}{2}\sum\limits_{r=1}^{m}G^G_{(r,r)}(X^G,y)\right)h+\sum\limits_{r=1}^{m} H^{M}_r(X^G,y)I^h_{(r)},
\end{align}
where 
\begin{align*}
G_{(r,r)}^G
=&%
C_1\sum\limits_{i=1}^d\sigma_r^i\left(\frac{\partial H_r^{[1]}}{\partial X^{G}_i}+vy_i\frac{\partial H_{r}^{[1]}}{\partial X^{G}_{d+1}}\right)h+\sum_{i=1}^{d+1}\frac{\partial H_r^{[1]}}{\partial y_i}\frac{\partial H_r^{[1]}}{\partial X^{G}_i}h^2.
\end{align*} 
According to (\ref{SG1}), the one-step approximation (\ref{3.5}) turns out to be
\begin{equation}
\begin{split}
\label{3.8.2}
&X^{G}=x-\left(\frac{\partial H^{M}_0(X^G,y)}{\partial y}+\frac{1}{2}\sum\limits_{r=1}^{m}\frac{\partial G^G_{(r,r)}(X^G,y)}{\partial y}\right)h-\sum\limits_{r=1}^{m}\frac{\partial H^{M}_r(X^G,y)}{\partial y}J^h_{(r)},\\
&Y^{G}=y+\left(\frac{\partial H^{M}_0(X^G,y)}{\partial X^G}+\frac{1}{2}\sum\limits_{r=1}^{m}\frac{\partial G^G_{(r,r)}(X^G,y)}{\partial X^G}\right)h+\sum\limits_{r=1}^{m}\frac{\partial H^{M}_r(X^G,y)}{\partial X^G}J^h_{(r)}.
\end{split}
\end{equation}
In the sequel, let $\frac{\partial S^G}{\partial y_j}:=\frac{\partial S^G}{\partial y_j}(X^G,y,h),$ $\frac{\partial S^G}{\partial X^G_j}:=\frac{\partial S^G}{\partial X_j^G}(X^G,y,h)$, $\frac{\partial H_r^{[1]}}{\partial y_j}:=\frac{\partial H_r^{[1]}}{\partial y_j}(x,y)$ and $\frac{\partial H_r^{[1]}}{\partial x_j}:=\frac{\partial H_r^{[1]}}{\partial x_j}(x,y)$ for $j=1,\cdots,d+1$ and $r=0,1,\cdots,m.$
Performing Taylor expansion to $\frac{\partial S^G}{\partial y_i}$ and $\frac{\partial S^G}{\partial X^G_i}$ at $(x,y,h)$, for $i=1,\cdots,d,$ we obtain
\begin{equation*}
\begin{split}
\label{3.8.4}
\frac{\partial S^{G}}{\partial X^{G}_i}=&C_2\sum_{j=1}^{d}M_{ij}x_jh+\sum\limits_{r=1}^m\bigg{(}\frac{\partial H_{r}^{[1]}}{\partial x_i}-\sum_{j=1}^{d} M_{ij}\sigma_r^j\bigg{)}I^h_{(r)}h-\sum_{j=1}^{d}M_{ij}f_j(y)h^2+\frac{\partial H_{0}^{[1]}}{\partial x_i}h^2\\
&+\sum\limits_{r=1}^m\frac{\partial^2 H_{r}^{[1]}}{\partial x_i\partial x_{d+1}}(X^{G}_{d+1}-x_{d+1})I^h_{(r)}h-C_1\sum\limits_{r_1,r_2=1}^m\sum_{j=1}^{d}\frac{\partial^2 H_{r_1}^{[1]}}{\partial x_i\partial x_j}\sigma_{r_2}^jI^h_{(r_1)}I^h_{(r_2)}h\\
&+\frac{1}{2}C_1\sum_{j=1}^{d}\sum\limits_{r=1}^m \sigma_r^j\left(\frac{\partial^2 H_{r}^{[1]}}{\partial x_i\partial x_j}+vy_i\frac{\partial^2 H_{r}^{[1]}}{\partial x_i\partial x_{d+1}}\right)h^2+R_4,
\end{split}
\end{equation*}
and
\begin{equation*}
\begin{split}
\label{3.8.5}
\frac{\partial S^{G}}{\partial y_i}=&C_1\sum\limits_{r=1}^{m}\left(\sigma_r^iI^h_{(r)}+f_i(y)h\right)+\sum\limits_{r=1}^{m}\frac{\partial H_r^{[1]}}{\partial y_i}I^h_{(r)}h+\sum\limits_{r=1}^m\sum\limits_{j=1}^{d+1}\frac{\partial^2 H_{r}^{[1]}}{\partial y_i\partial x_j}(X^{G}_{j}-x_j)I^h_{(r)}h\\
+&\left(\frac{\partial H_0^{[1]}}{\partial y_i}+\frac{C_1}{2}\sum\limits_{r=1}^m\left[\sum_{j=1}^{d}\sigma_r^j\frac{\partial^2 H_{r}^{[1]}}{\partial y_i\partial x_j}+v\sigma_r^i\left(\frac{\partial H_{r}^{[1]}}{\partial x_{d+1}}+y_i\frac{\partial^2 H_{r}^{[1]}}{\partial x_{d+1}\partial y_i}\right)\right]\right)h^2+R_5.
\end{split}
\end{equation*}
Similarly, 
\begin{align*}
\frac{\partial S^{G}}{\partial X^{G}_{d+1}}=&h+\sum\limits_{r=1}^{m}\frac{\partial H_{r}^{[1]}}{\partial x_{d+1}}I^h_{(r)}h+\sum\limits_{j=1}^{d+1}\frac{\partial^2 H_{r}^{[1]}}{\partial x_{d+1}\partial x_j}\left(X^{G}_j-x_j\right)I^h_{(r)}h+\frac{\partial H_{0}^{[1]}}{\partial x_{d+1}}h^2\\
&+C_1\sum\limits_{i=1}^d\sigma_r^i\frac{\partial^2 H_r^{[1]}}{\partial x_i\partial x_{d+1}}h^2+C_1\sum\limits_{i=1}^dv\sigma_r^iy_i\frac{\partial^2 H_r^{[1]}}{\partial x_{d+1}^2}h^2+R_6,
\end{align*}
and
\begin{align*}
\frac{\partial S^{G}}{\partial y_{d+1}}
=&v\left(C_1F(y)-\frac{1}{2}C_2\sum\limits_{i,j=1}^d x_iM_{ij}x_j\right)h+vC_1\sum\limits_{r=1}^m\sum\limits_{i=1}^d\sigma_r^iy_iI^h_{(r)}+\sum\limits_{r=1}^m\frac{\partial H_{r}^{[1]}}{\partial y_{d+1}}hI^h_{(r)}\\
&+\sum\limits_{i,j=1}^d\sum\limits_{r=1}^mv\sigma_r^iM_{ij}x_jhI^h_{(r)}+\sum\limits_{r=1}^m\sum\limits_{i=1}^{d+1}\frac{\partial^2 H_{r}^{[1]}}{\partial y_{d+1}\partial x_i}(X^{G}_i-x_i)hI^h_{(r)}+\frac{\partial H_{0}^{[1]}}{\partial y_{d+1}}h^2\\
&+\frac{C_1}{2}\sum\limits_{i=1}^d\sum\limits_{r=1}^m\sigma_r^i\left(v\frac{\partial H_r^{[1]}}{\partial x_i}+v^2y_i\frac{\partial H_{r}^{[1]}}{\partial x_{d+1}}+\frac{\partial^2 H_r^{[1]}}{\partial x_i\partial y_{d+1}}+vy_i\frac{\partial^2 H_{r}^{[1]}}{\partial x_{d+1}\partial y_{d+1}}\right)h^2\\
&+v\sum\limits_{i,j=1}^d\left(C_2\frac{\partial F(y)}{\partial y_i}M_{ij}x_jh^2-\frac{C_1}{2}\sum\limits_{r_1,r_2=1}^m\sigma_{r_1}^iM_{ij}\sigma_{r_2}^jhI^h_{(r_1)}I^h_{(r_2)}\right)+R_7.
\end{align*} 
Applying Taylor expansion to $\phi(X(h),Y(h))$ and $\phi(X^G,Y^G)$ at $(x,y)$ and taking expectations, we have
\begin{equation}
\label{EPD}
\begin{split}
&\mathbf{E}\phi(X(h),Y(h))-\mathbf{E}\phi(X^G,Y^G)\\
=&\sum_{i=1}^{d+1}\frac{\partial \phi(x,y)}{\partial x_i}\mathbf{E}\left(\frac{\partial S^{G}}{\partial y_i}-\frac{\partial S}{\partial y_i}\right)+\sum_{i=1}^{d+1}\frac{\partial \phi(x,y)}{\partial y_i}\mathbf{E}\left(\frac{\partial S}{\partial X_i}-\frac{\partial S^{G}}{\partial X_i^G}\right)\\
&+\frac{1}{2}\sum_{i,j=1}^{d+1}\frac{\partial^2\phi(x,y)}{\partial x_i\partial x_j}\mathbf{E}\left(\frac{\partial S}{\partial y_i}\frac{\partial S}{\partial y_j}-\frac{\partial S^{G}}{\partial y_i}\frac{\partial S^G}{\partial y_j}\right)\\
&+\sum_{i,j=1}^{d+1}\frac{\partial^2\phi(x,y)}{\partial y_i\partial x_j}\mathbf{E}\left(\frac{\partial S^{G}}{\partial X_i^G}\frac{\partial S^{G} }{\partial y_j}-\frac{\partial S}{\partial X_i}\frac{\partial S}{\partial y_j}\right)\\
&+\frac{1}{2}\sum_{i,j=1}^{d+1}\frac{\partial^2\phi(x,y)}{\partial y_i\partial y_j}\mathbf{E}\left(\frac{\partial S}{\partial X_i}\frac{\partial S}{\partial X_j}-\frac{\partial S^{G} }{\partial X_i^G}\frac{\partial S^{G} }{\partial X_j^G}\right)+\cdots.
\end{split}
\end{equation}
To make the symplectic numerical approximation be of higher weak order, we choose $H_i^{[j]},$ $i=0,\cdots,r$, $j=1,\cdots,\tau,$ such that the terms containing $h$ and $h^2$ in the right hand side of (\ref{EPD}) vanish. 
Note that the coefficients of $J_{(r)}^{h}$ and $h$ in $\frac{\partial S^{G}}{\partial X_i^G}$ and $\frac{\partial S^{G}}{\partial y_i}$ 
are the same as those in $\frac{\partial S}{\partial X_i}$ and $\frac{\partial S}{\partial y_i}$ with $i=1,\cdots,d+1,$ respectively.  
Then we get 
\begin{align*}
&\mathbf E\left(\frac{\partial S^{G}}{\partial X^{G}_{d+1}}\frac{\partial S^{G}}{\partial y_{d+1}}-\frac{\partial S}{\partial X_{d+1}}\frac{\partial S}{\partial y_{d+1}}\right)=\sum\limits_{r=1}^m\sum\limits_{i=1}^dvC_1\sigma_{r}^iy_i\frac{\partial H_{r}^{[1]}}{\partial x_{d+1}}h^2+h^3e_1(x,y),
\end{align*}
where $e_1(x,y)$ denotes the coefficient of the term containing $h^3$ and can be calculated based on the expression of the partial derivatives of $S^G$ and $S$, and so are the other remainder terms $e_l,$ $l=2,\cdots,7,$ in the sequel. Thus, we choose
$\frac{\partial H_{r}^{[1]}}{\partial x_{d+1}}=0$ for $r=1,\cdots,m.$
Substituting $\frac{\partial H_{r}^{[1]}}{\partial x_{d+1}}=0$ into $\frac{\partial S^{G}}{\partial X^{G}_{d+1}}$, we have
\begin{align*}
\mathbf{E}\left(\frac{\partial S^{G}}{\partial X^{G}_{d+1}}-\frac{\partial S}{\partial X_{d+1}}\right)=\frac{\partial H_{0}^{[1]}}{\partial x_{d+1}}h^2+\mathbf{E}(R_6)=\frac{\partial H_{0}^{[1]}}{\partial x_{d+1}}h^2+h^3e_2(x,y),
\end{align*}
which lead us to make
$\frac{\partial H_{0}^{[1]}}{\partial x_{d+1}}=0.$
In the same way, using $\frac{\partial H_{r}^{[1]}}{\partial x_{d+1}}=0$ for $r=0,1,\cdots,m,$ we derive
\begin{align*}
\mathbf{E}\bigg{(}\frac{\partial S}{\partial y_{i}}\frac{\partial S}{\partial y_{j}}-\frac{\partial S^{G}}{\partial 
y_{i}}\frac{\partial S^{G}}{\partial y_{j}}\bigg{)}
=C_1\sum\limits_{r=1}^m\bigg{(}vC_1\sigma_r^i\sigma_r^j-\sigma_r^i\frac{\partial H_r^{[1]}}{\partial y_j}-\sigma_r^j\frac{\partial H_r^{[1]}}{\partial y_i}\bigg{)}h^2
+h^3e_3(x,y)
\end{align*}
and
\begin{align*}
\mathbf{E}\left(\frac{\partial S}{\partial y_{i}}\frac{\partial S}{\partial X_{j}}-\frac{\partial S^{G}}{\partial y_{i}}\frac{\partial S^{G}}{\partial X_{j}^G}\right)=C_1\sum\limits_{r=1}^m\sigma_r^i\left(\frac{1}{2}\sum\limits_{k=1}^dM_{jk}\sigma_r^k-\frac{\partial H_r^{[1]}}{\partial x_j}\right)h^2+h^3e_4(x,y)
\end{align*}
with $i,j=1,\cdots,d,$ and hence choose
\begin{equation*}
\frac{\partial H_r^{[1]}}{\partial y_i}=\frac{1}{2}vC_1\sigma_r^i,\quad \frac{\partial H_r^{[1]}}{\partial x_i}=\frac{1}{2}\sum\limits_{j=1}^{d}M_{ij}\sigma_r^j, \quad r=1,\cdots,m.
\end{equation*}
Moreover, because
\begin{align*}
&\mathbf{E}\left(\frac{\partial S}{\partial X_i}\frac{\partial S}{\partial X_j}-\frac{\partial S^G}{\partial X_i^G}\frac{\partial S^G}{\partial X_j^G}\right)=h^3e_5(x,y),\quad i,j=1,\cdots,d+1,
\end{align*}
it has no influence in determining the undetermined functions.
Since both $\frac{\partial H_r^{[1]}}{\partial y_i}$ and $\frac{\partial H_r^{[1]}}{\partial x_i}$ with $r=0,1,\cdots,m,$ are independent of $x_i$ and $y_i,$ it then leads to 
\begin{align*}
&\mathbf{E}\left(\frac{\partial S}{\partial y_{i}}-\frac{\partial S^G}{\partial y_{i}}\right)=\left(\frac{1}{2}\sum\limits_{j,k=1}^{d}\frac{\partial^2 F(y)}{\partial y_i\partial y_j}M_{jk}x_k
+\frac{1}{2}vC_1f_i(y)-\frac{\partial H_0^{[1]}}{\partial y_i}\right)h^2+h^3e_6(x,y),\\
&\mathbf{E}\left(\frac{\partial 
S}{\partial X_{i}}-\frac{\partial S^{G}}{\partial X_{i}^G}\right)=\left(\frac{1}{2}\sum\limits_{j=1}^{d}M_{ij}f_j(y)-\frac{1}{2}\sum\limits_{j=1}^{d}vC_2M_{ij}x_j-\frac{\partial H_0^{[1]}}{\partial x_i}\right)h^2+h^3e_7(x,y)
\end{align*}
for $i=1,\cdots,d.$ We choose $H_0^{[1]}$ such that the above terms containing $h^2$ vanish, i.e.,
\begin{align*}
&\frac{\partial H_0^{[1]}}{\partial y_i}=\frac{1}{2}\sum\limits_{j,k=1}^{d}\frac{\partial^2 F(y)}{\partial y_i\partial y_j}M_{jk}x_k
+\frac{1}{2}vC_1f_i(y),\\
&\frac{\partial H_0^{[1]}}{\partial x_i}=\frac{1}{2}\sum\limits_{j=1}^{d}M_{ij}\left(f_j(y)-vC_2x_j\right).
\end{align*}
Substituting the above results on the partial derivatives of $H_{r}^{[1]},$ $r=0,1,\cdots,m,$ into (\ref{3.8.2}), we have the following scheme of \eqref{3.1}: 
\begin{equation}
\begin{split}
\label{3.8.9}
X^{G}_i=&x_i-\sum\limits_{r=1}^me^{vt_n}\sigma_r^iI^h_{(r)}-e^{vt_n}f_i(y)h-\frac{1}{2}\sum\limits_{r=1}^mve^{vt_n}\sigma_r^ihI^h_{(r)}\\
&-\frac{1}{2}\sum\limits_{j,k=1}^{d}\frac{\partial^2 F(y)}{\partial y_i\partial y_j}M_{jk}X^{G}_kh^2-\frac{1}{2}ve^{vt_n}f_i(y)h^2,\\
Y^{G}_i=&y_i+\sum_{j=1}^{d}e^{-vt_n}M_{ij}X^{G}_jh+\frac{1}{2}\sum\limits_{r=1}^m\sum\limits_{j=1}^{d}M_{ij}\sigma_r^jI^h_{(r)}h\\
&+\frac{1}{2}\sum\limits_{j=1}^{d}M_{ij}\left(f_j(y)-ve^{-vt_n}X^{G}_j\right)h^2,
\end{split}
\end{equation}
which is started at time $t_n=nh$ for $n=1,\cdots,N=T/h.$
That is, $x_i=X_i(t_n)$, $y_i=Y_i(t_n)$ for $i=1,\cdots,d$ and $y_{d+1}=t_n$. 

To transform scheme \eqref{3.8.9} into an equivalent scheme of \eqref{1.1},
we denote $P^h_i[n]:=e^{-vt_n}x_i,$ $Q^h_i[n]:=y_i,$ $P^h_i[n+1]:=e^{-vt_{n+1}}X_i^G$ and $Q^h_i[n+1]:=Y_i^G$ for $i=1,\cdots,d.$ Based on the transformation between two phase spaces of (\ref{1.1}) and (\ref{1.1.4.1}), we get
\begin{equation}
\begin{split}
\label{3.8.11}
P^h[n+1]=&e^{-vh}P^h[n]-\frac{h^2}2\nabla^2F(Q^h[n])MP^h[n+1]-h\Big{(}1+\frac{vh}2\Big{)}e^{-vh}f(Q^h[n])\\
&-\Big{(}1+\frac{vh}2\Big{)}e^{-vh}\sigma\Delta_{n+1}W,\\
Q^h[n+1]=&Q^h[n]+h\left(1-\frac{vh}2\right)e^{vh}MP^h[n+1]+\frac{h^2}2Mf(Q^h[n])+\frac{h}2M\sigma\Delta_{n+1}W,
\end{split}
\end{equation}
where $\sigma=(\sigma_1,\cdots,\sigma_r)$ and $\Delta_{n+1}W=W(t_{n+1})-W(t_{n}).$ Notice that $\Delta_{n}W$ can be simulated by $\xi^n\sqrt{h}$ with $\xi^n=(\xi_1^n,\cdots,\xi_d^n)^{\top}$ being an $\mathcal{F}_{t_n}$-adapted $d$-dimensional normal distributed random vector. 

\begin{rk}
The proposed scheme (\ref{3.8.11}) also has exponentially dissipative phase volume. More precisely, denoting $D(q)=\left(I_d+\frac{h^2}2\nabla^2F(q)M\right)^{-1},$ 
the determinant of Jacobian matrix
\begin{align*}
\left|\begin{array}{cc}{\frac{\partial P^h[1]}{\partial p}}&{\frac{\partial P^h[1]}{\partial q}}\\{\frac{\partial Q ^h[1]}{\partial p}}&{\frac{\partial Q ^h[1]}{\partial q}}\end{array}\right|=&\left|\begin{array}{cc}{e^{-vh}D(q)}&{\frac{\partial P^h[1]}{\partial q}}\\{h(1-\frac{vh}{2})MD(q)}&{D(q)^{-\top}+h(1-\frac{vh}{2})e^{vh}M{\frac{\partial P^h[1]}{\partial q}}}\end{array}\right|\\
=&|e^{-vh}I_{d}||D(q)||D(q)^{-\top}|=e^{-vhd}.
\end{align*}
Furthermore, $\left|\begin{array}{cc}{\frac{\partial P^h[n]}{\partial p}}&{\frac{\partial P^h[n]}{\partial q}}\\{\frac{\partial Q^h[n]}{\partial p}}&{\frac{\partial Q^h[n]}{\partial q}}\end{array}\right|=e^{-vt_nd}.$

\end{rk}

\subsection{Conformal symplectic structure and ergodicity}
In this subsection, we prove the conformal symplecticity of the proposed scheme (\ref{3.8.11}) as well as its ergodicity.
\begin{tm}
\label{th4.1}
The proposed scheme (\ref{3.8.11}) preserves conformal symplectic structure, i.e,
$$dP^h[n+1]\wedge dQ^h[n+1]=e^{-vh}dP^h[n]\wedge dQ^h[n].$$
\end{tm}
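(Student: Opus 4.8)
The plan is to deduce the conformal symplecticity of \eqref{3.8.11} from the genuine symplecticity of the one-step map \eqref{3.8.9} for the extended autonomous system \eqref{3.1}, together with the change of variables $x=e^{vt_n}P^h[n]$, $y=Q^h[n]$, $X^G=e^{vt_{n+1}}X^G$\,($=e^{vt_{n+1}}$ times the identification $P^h[n+1]=e^{-vt_{n+1}}X^G$) and $Y^G=Q^h[n+1]$ that links the two schemes. Throughout, the differentials are taken for a fixed realization of the Wiener increment $\Delta_{n+1}W$, so the statement is the pathwise stochastic symplecticity and $d(\Delta_{n+1}W)=0$; the increment enters the generating map only as a parameter.

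First I would establish that the map $(x,y)\mapsto(X^G,Y^G)$ defined by \eqref{3.5} is symplectic on the full $(2d+2)$-dimensional phase space. Since it is generated by the first-kind generating function through $X^G=x-\partial S^G/\partial y$ and $Y^G=y+\partial S^G/\partial X^G$, inverting the first relation to $x_i=X_i^G+\partial S^G/\partial y_i$ gives $\sum_{i=1}^{d+1}\big(x_i\,dy_i+Y_i^G\,dX_i^G\big)=d\Big(\sum_{i=1}^{d+1}X_i^G y_i+S^G\Big)$, because $S^G$ is a genuine scalar function of $(X^G,y)$. Taking the exterior derivative of both sides annihilates the right-hand side and yields $\sum_{i=1}^{d+1}dX_i^G\wedge dY_i^G=\sum_{i=1}^{d+1}dx_i\wedge dy_i$. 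This is the key structural input.

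The main, and only delicate, step is the reduction from the extended symplectic form to the physical $d$ components, where the $(d+1)$-th coordinates play the role of the time--energy pair. Here I would use that the scheme produces $Y_{d+1}^G=y_{d+1}+h$, a pure time shift, and $X_{d+1}^G=x_{d+1}-\partial S^G/\partial y_{d+1}$ with $\partial S^G/\partial y_{d+1}$ independent of $x_{d+1}$, since $H_0$ depends on $X_{d+1}$ only through the additive term $X_{d+1}$. Restricting the one-step map to a fixed time slice and fixed energy variable, i.e.\ setting $dy_{d+1}=dx_{d+1}=0$, forces $dY_{d+1}^G=0$, so the term $dX_{d+1}^G\wedge dY_{d+1}^G$ drops out and the identity of the previous paragraph collapses to $\sum_{i=1}^{d}dX_i^G\wedge dY_i^G=\sum_{i=1}^{d}dx_i\wedge dy_i$. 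I expect verifying these two facts about the $(d+1)$-th component (the pure shift of $Y_{d+1}^G$ and the $x_{d+1}$-independence of $X_{d+1}^G$) to be where the care is needed.

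Finally I would substitute the change of variables. Because $t_n$ is held fixed along one step, $dX_i^G=e^{vt_{n+1}}\,dP_i^h[n+1]$, $dx_i=e^{vt_n}\,dP_i^h[n]$, $dY_i^G=dQ_i^h[n+1]$ and $dy_i=dQ_i^h[n]$; inserting these into the reduced identity gives $e^{vt_{n+1}}\,dP^h[n+1]\wedge dQ^h[n+1]=e^{vt_n}\,dP^h[n]\wedge dQ^h[n]$, and dividing by $e^{vt_{n+1}}$ produces exactly $dP^h[n+1]\wedge dQ^h[n+1]=e^{-vh}\,dP^h[n]\wedge dQ^h[n]$, as claimed. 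The cumulative relation $dP^h[n]\wedge dQ^h[n]=e^{-vt_n}\,dp\wedge dq$ then follows by composing the one-step maps, consistently with the Jacobian computation in the preceding remark.
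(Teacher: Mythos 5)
Your proof is correct, and it takes a genuinely different route from the paper's. The paper proves Theorem \ref{th4.1} by a short direct computation on the final scheme \eqref{3.8.11}: it differentiates the two update equations, wedges them, and cancels the cross terms $-\frac{h^2}{2}\nabla^2F M\,dP^h[n+1]\wedge dQ^h[n]$ and $+\frac{h^2}{2}\,dP^h[n+1]\wedge M\nabla^2F\,dQ^h[n]$ using only the symmetry of $M$ and of the Hessian $\nabla^2F$ (the same symmetry kills the terms of type $A\,dQ^h[n]\wedge dQ^h[n]$). You instead argue structurally: the map \eqref{3.5} generated by $S^G$ is symplectic on the extended $(2d+2)$-dimensional space; the choices $\partial H_r^{[1]}/\partial x_{d+1}=0$ give $\partial S^G/\partial X^G_{d+1}=h$, so $Y^G_{d+1}=y_{d+1}+h$ is a pure time shift and the pullback along the slice $dx_{d+1}=dy_{d+1}=0$ reduces the conserved two-form to $\sum_{i=1}^d dX_i^G\wedge dY_i^G=\sum_{i=1}^d dx_i\wedge dy_i$; the rescaling $x=e^{vt_n}P^h[n]$, $X^G=e^{vt_{n+1}}P^h[n+1]$ that defines \eqref{3.8.11} from \eqref{3.8.9} then produces the factor $e^{-vh}$. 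All three steps are sound. What your route buys is generality and insight: it shows that any scheme produced by the modified generating-function construction (any $k$, $k'$, any truncation) is automatically conformal symplectic, whereas the paper's computation is tailored to \eqref{3.8.11}. What the paper's route buys is brevity and self-containedness: two lines, no appeal to the generating-function machinery, and no solvability hypotheses for the implicit relations.

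One input of yours deserves an explicit check: you invoke that ``$S^G$ is a genuine scalar function of $(X^G,y)$''. The paper never exhibits the functions $H_r^{[1]}$; it only prescribes their partial derivatives, so for your argument to apply you must verify that these prescriptions are integrable. They are, but precisely thanks to the symmetry that powers the paper's proof: for instance $\partial_{x_k}\bigl(\partial H_0^{[1]}/\partial y_i\bigr)=\frac12\bigl(\nabla^2F\,M\bigr)_{ik}$ while $\partial_{y_i}\bigl(\partial H_0^{[1]}/\partial x_k\bigr)=\frac12\bigl(M\nabla^2F\bigr)_{ki}$, and these agree because $M$ and $\nabla^2F$ are symmetric. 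Concretely, one may take $H_r^{[1]}=\frac12\sum_{i,j=1}^d x_iM_{ij}\sigma_r^j+\frac{v}2e^{vy_{d+1}}\sum_{i=1}^d\sigma_r^iy_i$ for $r\ge1$, and $H_0^{[1]}=\frac12\sum_{i,j=1}^d x_iM_{ij}f_j(y)-\frac{v}4e^{-vy_{d+1}}\sum_{i,j=1}^d x_iM_{ij}x_j+\frac{v}2e^{vy_{d+1}}F(y)$, whose gradients reproduce the paper's choices (and which indeed make $G^G_{(r,r)}$ independent of $X^G$, as your time-shift step requires). Once this is recorded, together with the standard small-$h$ solvability of the implicit relations \eqref{3.5}, your proof is complete.
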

\begin{proof}
Based on (\ref{3.8.11}), we obtain
\begin{align*}
&dP^h[n+1]\wedge dQ^h[n+1]\\
=&dP^h[n+1]\wedge dQ^h[n]+\frac{1}{2}h^2dP^h[n+1]\wedge M\nabla^2FdQ^h[n]\\
=&\left(e^{-vh}P^h[n]-\frac{1}{2}h^2\nabla^2FMdP^h[n+1]\right)\wedge dQ^h[n]+\frac{1}{2}h^2dP^h[n+1]\wedge M\nabla^2FdQ^h[n].
\end{align*}
Since the matrix $M$ is symmetric, we finally get
\begin{align*}
dP^h[n+1]\wedge dQ^h[n+1]=e^{-vh}dP^h[n]\wedge dQ^h[n].
\end{align*}
\end{proof}

To show the ergodicity of \eqref{3.8.11}, we give the following conditions which ensure the existence and uniqueness of the invariant measure (see \cite{MSH02} and references therein).
\begin{con}\label{lya}
The Markov chain $Z_n:=(P^h[n]^{\top},Q^h[n]^{\top})^{\top}$ with $Z_0=z$ satisfies:
\begin{enumerate}[(i)]
\item for any $\gamma\ge1$, there exists $C_2=C(\gamma)>0$ which is independent of $h$, such that $\mathbf{E}\|Z_1\|^\gamma\le C_2(1+\|z\|^\gamma)$ for all $z\in\R^{2d}$;
\item there exist $C_1>0$ and $\epsilon>0$ which are independent of $h$, such that $\mathbf{E}\|Z(h)-Z_1\|^2\le C_1(1+\|z\|^2)h^{\epsilon+2}$ for all $z\in\R^{2d}$, where $Z(h)=(P(h)^{\top},Q(h)^{\top})^{\top}.$
\end{enumerate}
\end{con}
\begin{con}\label{min}
For some fixed compact set $G\in\mathcal{B}(\R^{2d})$ with $\mathcal{B}(\R^{2d})$ denoting the Borel $\sigma$-algebra on $\R^{2d}$, the Markov chain $Z_n:=(P^h[n]^\top,Q^h[n]^\top)^\top\in\mathcal{F}_{t_n}$ with transition kernel $\mathcal{P}_n(z,A)$ satisfies:
\begin{enumerate}[(i)]
\item for some $z^*\in\text{\rm int}(G)$ and for any $\delta>0$, there exists a positive integer $n$ such that
$$\mathcal{P}_n(z,B_\delta(z^*))>0,\quad\forall\;y\in G,$$
where $B_\delta(z^*)$ denotes the open ball of radius $\delta$ centered at $z^*$;
\item for any $n\in\N$, the transition kernel $\mathcal{P}_n(z,A)$ possesses a density $\rho_n(z,w)$ which is jointly continuous in $(z,w)\in G\times G$.
\end{enumerate}
\end{con}
\begin{tm}\label{7.3} (see Theorem 7.3 in \cite{MSH02}) For some $K\in\N$, if Condition \ref{lya} and  Condition \ref{min} are satisfied by a Markov chain $Z_n$ when sampled at rate $K$, precisely, these conditions hold for the chain $\tilde Z_n:=Z_{nK}$, then $Z_n$ has a unique invariant measure.

\end{tm}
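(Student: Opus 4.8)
The plan is to recognize this statement as an instance of the standard Harris--Meyn--Tweedie ergodic theory for Markov chains, in which a Lyapunov (drift) condition delivers existence of an invariant measure while a minorization (irreducibility) condition on a compact set delivers uniqueness. Conditions \ref{lya} and \ref{min} are precisely these two ingredients, so the task reduces to assembling them in the correct order for the sub-sampled chain $\tilde Z_n = Z_{nK}$ and then transferring the conclusion back to $Z_n$ itself. Since the result is exactly Theorem 7.3 of \cite{MSH02}, my aim is to reproduce its logical skeleton rather than reprove the abstract machinery.

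First I would extract a geometric drift condition from Condition \ref{lya}. Taking $V(z) = 1 + \|z\|^2$, part (i) controls moment growth, while part (ii) compares one step of the chain with one step of the exact flow $Z(h)$. Since the exact Langevin dynamics admits a Lyapunov function by virtue of Assumption \ref{ap1} (the dissipation $-vP$ together with the coercivity condition on $F$ forces $\mathbf{E}^z V(Z(h)) \le \lambda V(z) + b$ with $\lambda<1$), the higher-order local error in Condition \ref{lya}(ii) lets this drift be inherited by the numerical chain up to a term negligible for small $h$, and iterating $K$ steps preserves it for $\tilde Z_n$. A drift of this form makes every sublevel set $\{V \le R\}$ — hence the compact set $G$ — a set into which the chain returns with geometrically controlled excursions, and the Krylov--Bogolyubov argument (tightness of the Ces\`aro averages of the transition kernels) yields at least one invariant measure.

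Next I would use Condition \ref{min} to upgrade existence to uniqueness. Part (ii) says the $n$-step kernel has a jointly continuous density on $G \times G$, which makes $G$ a small set, while part (i) supplies a point $z^*$ reachable with positive probability from every $z \in G$; together these give a minorization $\mathcal{P}_n(z,\cdot) \ge \eta\,\nu(\cdot)$ for $z \in G$, so the chain is $\nu$-irreducible and aperiodic. Combining this minorization on the small set $G$ with the drift from the previous step is exactly the hypothesis of Harris's theorem, which forces any two invariant measures to coincide; thus $\tilde Z_n$ has a unique invariant measure $\mu_h$. Finally, because $\mu_h$ is invariant for $\tilde Z_n = Z_{nK}$ and the drift/minorization pair rules out periodicity, $\mu_h$ is invariant for the one-step chain $Z_n$ and is its unique invariant measure.

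The main obstacle is not the abstract assembly, which is routine once both conditions are in hand, but rather the fact that the noise in \eqref{3.8.11} is degenerate, entering only the $P$-component. This is why the minorization must be stated for the $n$-step (here $K$-sampled) kernel rather than a single step: one needs the hypoelliptic smoothing reflected in \eqref{hypoelliptic} to propagate randomness into the $Q$-component before a continuous transition density can exist. Consequently the genuinely delicate points in any self-contained proof are verifying Condition \ref{min}(ii) (existence and continuity of the density under degenerate forcing) and checking that the drift constants from Condition \ref{lya} are uniform enough that the sublevel sets are truly small; everything else follows from the general theorem of \cite{MSH02}.
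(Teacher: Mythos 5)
The paper offers no proof of this statement at all---it is quoted directly as Theorem 7.3 of \cite{MSH02} and then simply applied in the proof of Theorem \ref{ergodic}---and your sketch correctly reconstructs the drift-plus-minorization (Harris/Meyn--Tweedie) machinery on which that cited theorem rests: Condition \ref{lya} transferred into a geometric drift via comparison with the exact flow (this is Theorem 7.2 of \cite{MSH02}, which the paper also invokes), Condition \ref{min} upgraded to a small-set minorization, and the two combined for existence and uniqueness. One small repair to your final step: passing from the sampled chain $\tilde Z_n=Z_{nK}$ back to $Z_n$ is not a matter of aperiodicity but of the Ces\`aro average $\nu=\frac{1}{K}\sum_{j=0}^{K-1}\mu_h\mathcal{P}^j$, which is invariant for the one-step kernel and also for $\mathcal{P}^K$, hence equals $\mu_h$ by uniqueness, so $\mu_h$ itself is one-step invariant and is the unique such measure since any one-step invariant measure is $\mathcal{P}^K$-invariant.
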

\begin{tm}\label{ergodic}
Assume that the vector field $f$ is globally Lipschitz. The solution $({P^h[n]},Q^h[n])$ of \eqref{3.8.11}, which is an $\mathcal{F}_{t_n}$-adapted Markov chain, satisfies Condition \ref{lya} and hence admits an invariant measure $\mu_h$ on $\R^{2d}$. In addition, if $f$ is a linear function, then Condition \ref{min} is also satisfied and the invariant measure is unique, that is, \eqref{3.8.11} is ergodic.
\end{tm}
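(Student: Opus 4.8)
The plan is to verify the two structural hypotheses of the abstract ergodicity framework of \cite{MSH02}: first Condition \ref{lya}, which (together with the Lyapunov structure that Assumption \ref{ap1} transfers from the continuous system) yields existence of an invariant measure $\mu_h$ via a Krylov--Bogolyubov argument; and then, in the linear case, Condition \ref{min}, so that Theorem \ref{7.3} upgrades existence to uniqueness and hence ergodicity. Throughout I would first make the implicit scheme \eqref{3.8.11} explicit by solving for $P^h[n+1]$: writing $D(q)=(I_d+\frac{h^2}2\nabla^2F(q)M)^{-1}$ as in the Remark, and noting that $\nabla^2F=\nabla f$ is bounded by the Lipschitz constant of $f$, one sees that $\|D(q)\|$ is bounded uniformly in $q$ and in all sufficiently small $h$. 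This uniform bound is what keeps every constant below independent of $h$.

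For Condition \ref{lya}(i) I would substitute the explicit form of $P^h[1]$ into $Q^h[1]$, take norms, and apply Minkowski's inequality; the linear growth $\|f(q)\|\le C(1+\|q\|)$, the bound on $\|D(q)\|$, and the Gaussian estimates $\mathbf{E}\|\sigma\Delta_1W\|^\gamma\le Ch^{\gamma/2}$ then give $\mathbf{E}\|Z_1\|^\gamma\le C_2(1+\|z\|^\gamma)$ with $C_2$ independent of $h$. For Condition \ref{lya}(ii) I would compare the stochastic Taylor (It\^{o}) expansion of the exact solution $Z(h)$ with the Taylor expansion of the one step $Z_1$, both issued from $z$. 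The crucial point is that the leading $O(\sqrt h)$ stochastic term of the $P$-component, namely $-\sigma W(h)$, is reproduced exactly by the scheme, and that the $Q$-component carries no $O(\sqrt h)$ noise at all; consequently the two expansions agree through order $\sqrt h$, their deterministic parts agree to $O(h^2)$, and their difference is of mean-square order $h^{3/2}$ (the mismatch $v\sigma\int_0^hW(s)\,ds$ versus $\frac{vh}2\sigma W(h)$ being $O(h^{3/2})$ in $L^2$). Hence $\mathbf{E}\|Z(h)-Z_1\|^2\le C_1(1+\|z\|^2)h^{3}$, i.e. Condition \ref{lya}(ii) holds with $\epsilon=1$, the factor $1+\|z\|^2$ coming from the linear growth of $f$ and the moment bounds. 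Existence of $\mu_h$ then follows from the framework of \cite{MSH02}.

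In the linear case $f(q)=\nabla F(q)=Aq$ with $A$ symmetric, the Hessian $\nabla^2F\equiv A$ is constant, so \eqref{3.8.11} becomes an affine Gaussian recursion $Z_{n+1}=\mathcal A Z_n+\mathcal B\,\Delta_{n+1}W$ with deterministic $h$-dependent matrices $\mathcal A,\mathcal B$. Consequently $Z_n$ conditioned on $Z_0=z$ is Gaussian with mean $\mathcal A^nz$ and covariance $\Sigma_n=h\sum_{k=0}^{n-1}\mathcal A^k\mathcal B\mathcal B^\top(\mathcal A^\top)^k$. To obtain Condition \ref{min}(ii) I would show that $\Sigma_K$ is positive definite for some $K$; then the transition kernel $\mathcal P_K(z,\cdot)$ has the smooth, strictly positive Gaussian density
\[
\rho_K(z,w)=\frac{1}{(2\pi)^{d}|\Sigma_K|^{1/2}}\exp\Big(-\tfrac12(w-\mathcal A^Kz)^\top\Sigma_K^{-1}(w-\mathcal A^Kz)\Big),
\]
which is jointly continuous in $(z,w)$, while Condition \ref{min}(i) follows for free by taking any $z^*$ and using $\mathcal P_K(z,B_\delta(z^*))=\int_{B_\delta(z^*)}\rho_K(z,w)\,dw>0$. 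Invoking Theorem \ref{7.3} for the chain sampled at rate $K$ then gives uniqueness of the invariant measure, hence ergodicity.

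The main obstacle is the nondegeneracy of $\Sigma_K$. A single step is genuinely degenerate: the noise enters only through $\sigma$, so both blocks of $\mathcal B$ vanish on $\ker\sigma$ and the range of $\mathcal B$ is exactly $d$-dimensional (a graph over the $P$-directions), whence $\Sigma_1$ has rank $\le d<2d$. One must therefore propagate the noise through the coupling and verify the discrete Kalman controllability condition $\mathrm{rank}\,[\mathcal B,\mathcal A\mathcal B,\dots,\mathcal A^{2d-1}\mathcal B]=2d$, which is the numerical counterpart of the hypoelliptic condition \eqref{hypoelliptic}. This is where the standing hypotheses enter decisively: $\mathrm{rank}\{\sigma_1,\dots,\sigma_m\}=d$ makes the $P$-block noise full rank, while the positive definiteness of $M$ guarantees that $\mathcal A$ mixes the $P$- and $Q$-blocks so that $\mathcal A\mathcal B$ supplies the missing $Q$-directions. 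Establishing this rank identity, together with the attendant uniform-in-$h$ lower bound on $\Sigma_K$, is the technical heart of the uniqueness claim.
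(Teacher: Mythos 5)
Your proof has the same two--step architecture as the paper's: Condition \ref{lya} plus the Lyapunov estimate of \cite{MSH02} for existence of $\mu_h$, then Condition \ref{min} for the chain sampled at rate $K=2$ and Theorem \ref{7.3} for uniqueness. For Condition \ref{lya} the content is essentially identical: the paper proves (ii) by writing both $Z(h)$ and $Z_1$ in mild form and closing with Gronwall's inequality, while you match stochastic Taylor expansions; both exploit that the scheme reproduces the leading noise $-\sigma W(h)$ exactly and both land on $\mathbf{E}\|Z(h)-Z_1\|^2\le C(1+\|z\|^2)h^3$, i.e. $\epsilon=1$. Where you genuinely differ is the uniqueness step. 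You recast the linear-drift scheme as a Gaussian recursion $Z_{n+1}=\mathcal{A}Z_n+\mathcal{B}\Delta_{n+1}W$ and reduce Condition \ref{min} to positive definiteness of the two-step covariance $\Sigma_2$, i.e.\ to the Kalman rank condition $\mathrm{rank}\,[\mathcal{B},\mathcal{A}\mathcal{B}]=2d$; this buys an explicit, everywhere positive, jointly smooth transition density, so both items of Condition \ref{min} drop out at once and no auxiliary compact set is needed. The paper instead fixes $G=\{Q=0,\;\|P\|\le1\}$, proves invertibility of the affine map $(\sigma\Delta_1W,\sigma\Delta_2W)\mapsto(P^h[2],Q^h[2])$ by explicit elimination in \eqref{y1}--\eqref{q1}, gets Condition \ref{min}(i) from the positive probability of Brownian cylinder sets, and (ii) by pushing the Gaussian law of the increments through that invertible map. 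These are the same mathematical fact in two packagings; yours is the more standard linear-systems one. (Also, your insistence on a uniform-in-$h$ lower bound for $\Sigma_2$ is unnecessary: Condition \ref{min} and Theorem \ref{7.3} are applied at a fixed step size.)

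The genuine shortfall is that you never prove the rank condition: you name it ``the technical heart'' and offer only a heuristic, whereas it is precisely the step that the paper's equations \eqref{y1}--\eqref{q1} carry out, so as written your proposal does not yet deliver uniqueness. It can be closed in a few lines. To leading order in $h$ one computes $\mathcal{B}u\approx\bigl(-\sigma u,\,-\tfrac{h}{2}M\sigma u\bigr)$ and $(\mathcal{A}-I)\mathcal{B}u\approx\bigl(vh\sigma u,\,-hM\sigma u\bigr)$ for $u\in\R^m$. If $(x,y)\in\R^{2d}$ is orthogonal to all columns of $\mathcal{B}$ and of $\mathcal{A}\mathcal{B}$ (equivalently of $(\mathcal{A}-I)\mathcal{B}$), then, since $\sigma^{\top}$ has trivial kernel by $\mathrm{rank}\,\sigma=d$, the two families of orthogonality relations give $x+\tfrac{h}{2}My=0$ and $vx-My=0$; substituting $My=vx$ into the first yields $\bigl(1+\tfrac{vh}{2}\bigr)x=0$, hence $x=0$ and then $y=0$ because $M$ is invertible. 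The $O(h^2)$ corrections to these relations are absorbed by exactly this computation for all sufficiently small $h$ (or, for each fixed admissible $h$, one can run the paper's exact elimination, which uses only invertibility of $I_d+\frac{h^2}{2}\nabla^2F\,M$ and of $M$). With that verification added, your argument is a complete and correct alternative to the paper's Step~2.
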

\begin{proof}
{\bf Step 1.} We first show that scheme \eqref{3.8.11} satisfies Condition \ref{lya}. Denote $Z(t)=(P(t)^{\top},Q(t)^{\top})^{\top}\in\R^{2d}$, $Z_n=(P^h[n]^{\top},Q^h[n]^{\top})^{\top}\in\R^{2d}$, $\sigma=(\sigma_1,\cdots,\sigma_r)\in\R^{d\times r}$, $W=(W_1,\cdots,W_r)^{\top}\in\R^r$ and $D(q)=\left(I_d+\frac{h^2}2\nabla^2F(q)M\right)^{-1}$. 
We rewrite \eqref{3.8.11} as
\begin{equation}\label{mild}
\begin{split}
P^h[1]=&D(q)\left(e^{-vh}p-\Big{(}1+\frac{vh}2\Big{)}e^{-vh}\sigma\Delta_1W-h\Big{(}1+\frac{vh}2\Big{)}e^{-vh}f(q)\right),\\
Q^h[1]=&q+h\Big{(}1-\frac{vh}2\Big{)}e^{vh}MP^h[1]+\frac{h^2}2Mf(q)+\frac{h}2M\sigma\Delta_1W
\end{split}
\end{equation}
with $z:=(P_0^{\top},Q_0^{\top})^{\top}=(p^{\top},q^{\top})^{\top}$, 
which yields
\begin{align}\label{X1}
\mathbf{E}\|P^h[1]\|^\gamma+\mathbf{E}\|Q^h[1]\|^\gamma
\le& C(1+\|p\|^\gamma+\|q\|^{\gamma})+C(1+\|q\|^\gamma+\mathbf{E}\|P^h[1]\|^\gamma)\\
\le&C(1+\|p\|^\gamma+\|q\|^\gamma)\nonumber
\end{align}
based on the fact that vector field $f$ is globally Lipschitz, the matrix $I+\frac{h^2}2\nabla^2F(q)M$ is positive definite and $\left\|D(q)\right\|\le1$ for any $q\in\R^d$ and $h\in(0,1)$. 
As the norm $\|Z_1\|=(\|P^h[1]\|^2+\|Q^h[1]\|^2)^{\frac 12}$ is equivalent to the norm $(\|P^h[1]\|^\gamma+\|Q^h[1]\|^\gamma)^{\frac{1}{\gamma}}$, Condition \ref{lya} $(\romannumeral1)$ holds.

Rewrite \eqref{1.1} into the following mild solution form
\begin{equation*}
\begin{split}
&P(h)=p-\int_0^he^{-v(h-s)}f(Q(s))ds-\int_0^he^{-v(h-s)}\sigma dW(s),\\
&Q(h)=q+\int_0^hMP(s)ds
\end{split}
\end{equation*}
with $P(0)=p$ and $Q(0)=q.$ Based on $\eqref{3.8.11}$, we have
\begin{equation*}
\begin{split}
P(h)-P^h[1]=&\bigg{[}h\Big{(}1+\frac{vh}2\Big{)}e^{-vh}f(q)+\frac{h^2}2\nabla^2F(q)MP^h[1]-\int_0^he^{-v(h-s)}f(Q(s))ds\bigg{]}\\
&+\left[\Big{(}1+\frac{vh}2\Big{)}e^{-vh}\sigma\Delta_1W-\int_0^he^{-v(h-s)}\sigma dW(s)\right]\\
=&:\uppercase\expandafter{\romannumeral1}+\uppercase\expandafter{\romannumeral2},\\
Q(h)-Q^h[1]=&\left[\int_0^hMP(s)ds-h(1-\frac{vh}2)e^{vh}MP^h[1]\right]-\left[\frac{h}2M\sigma\Delta_1W+\frac{h^2}2Mf(q)\right]\\
=&:\uppercase\expandafter{\romannumeral3}+\uppercase\expandafter{\romannumeral4}.
\end{split}
\end{equation*}
Now we estimate above terms respectively.
\begin{align}\label{1}
\mathbf{E}\|\uppercase\expandafter{\romannumeral1}\|^2\le&C\mathbf{E}\left\|\frac{h^2}2\nabla^2F(q)P^h[1]\right\|^2+C\mathbf{E}\left\|\int_0^he^{-v(h-s)}\left(f(Q(s))-f(q)\right)ds\right\|^2\nonumber\\
&+C\left\|\int_0^he^{-v(h-s)}dsf(q)-h\Big{(}1+\frac{vh}2\Big{)}e^{-vh}f(q)\right\|^2\nonumber\\
\le&Ch^4(1+\|z\|^2)+C\int_0^he^{-2v(h-s)}ds\int_0^h\left(\|Q(s)-Q^h[1]\|^2+\|Q^h[1]-q\|^2\right)ds\nonumber\\
&+C\left(\frac{1-e^{-vh}}v-h\Big{(}1+\frac{vh}2\Big{)}e^{-vh}\right)^2(1+\|q\|^2)\nonumber\\
\le&Ch^3(1+\|z\|^2)+C\int_0^h\|Q(s)-Q^h[1]\|^2ds,
\end{align}
where in the last step we have used \eqref{X1}. For the term $\uppercase\expandafter{\romannumeral2}$, based on the It\^o isometry,
\begin{align}\label{2}
\mathbf{E}\|\uppercase\expandafter{\romannumeral2}\|^2\le\int_0^h\left(\Big{(}1+\frac{vh}2\Big{)}e^{-vh}-e^{-v(h-s)}\right)^2ds{\rm Tr}\left(\sigma\sigma^{\top}\right)\le Ch^3.
\end{align}
Similarly, 
we have
\begin{align}\label{3}
\mathbf{E}\|\uppercase\expandafter{\romannumeral3}\|^2
\le&C\mathbf{E}\left\|\int_0^hM\left(P(s)-P^h[1]\right)ds\right\|^2+C\mathbf{E}\left\|h\left(1-\Big{(}1-\frac{vh}2\Big{)}e^{vh}\right)MP^h[1]\right\|^2\\
\le&C\int_0^h\|P(s)-P^h[1]\|^2ds+Ch^4(1+\|z\|^2),\nonumber
\end{align}
and
\begin{align}
\label{4}
\mathbf{E}\|\uppercase\expandafter{\romannumeral4}\|^2
\le& Ch^3(1+\|q\|^2).
\end{align}
From \eqref{1}--\eqref{4}, we conclude
\begin{align*}
\mathbf{E}\|Z(h)-Z_1\|^2\le C\int_0^h\mathbf{E}\|Z(s)-Z_1\|^2ds+Ch^3(1+\|z\|^2),
\end{align*}
which together with Gronwall inequality yields Condition \ref{lya} $(\romannumeral2)$ with $\epsilon=1$.
In this case, there exist real numbers $\alpha\in(0,1)$ and $\beta\in[0,\infty)$ such that $\mathbf{E}[V(Z_{n+1})|\mathcal{F}_{t_n}]\le\alpha V(Z_n)+\beta$ for
$V(z)=\frac12\|p\|^2+F(q)+\frac{v}{2}p^\top q+\frac{v^2}{4}\|q\|^2+1$ with $z=(p^\top, q^\top)^\top$ (see Theorem 7.2 \cite{MSH02}). Hence, 
$$\mathbf{E}[V(Z_{n+1})]\le\alpha\mathbf{E}[V(Z_n)]+\beta\le \alpha^{n+1}\mathbf{E}[V(Z_0)]+\beta\frac{1-\alpha^n}{1-\alpha}\le C(Z_0),$$
which induces the existence of invariant measures (see Proposition 7.10 \cite{prato06}).

{\bf Step 2.} We now consider the chain $Z_{2n}$ sampled at rate $K=2$ and verify Condition \ref{min} when $f$ is linear with a constant $C_f:=\nabla f=\nabla^2F$. 
Let $G:=\left\{(P^{\top},Q^{\top})^{\top}\in\R^{2d}:Q=0,\|P\|\le 1\right\}$ which is a compact set. For any $z=(p^{\top},0)^{\top}\in G$ and $w=(w_1^{\top},w_2^{\top})^{\top}\in B$ with $B\in\mathcal{B}(\R^{2d})$, we tend to show that $\Delta_1W$ and $\Delta_2W$ can be properly chosen to ensure that $P^h[2]=w_1$ and $Q^h[2]=w_2$ starting from $(P_0^{\top},Q_0^{\top})^{\top}=z$. Denoting $L_h=h\left(1-\frac{vh}2\right)e^{vh}M$, from \eqref{3.8.11}, we have
\begin{align}\label{y1}
w_1=&e^{-vh}P^h[1]-\frac{h^2}2C_fMw_1-h\Big{(}1+\frac{vh}2\Big{)}e^{-vh}f(Q^h[1])
-\Big{(}1+\frac{vh}2\Big{)}e^{-vh}\sigma\Delta_{2}W,\\\label{y2}
w_2=&Q^h[1]+L_hw_1+\frac{h^2}2Mf(Q^h[1])+\frac{h}2M\sigma\Delta_{2}W,\\
=&Q^h[1]+L_hw_1+\frac{h}2\Big{(}1+\frac{vh}2\Big{)}^{-1}e^{vh}M\left(e^{-vh}P^h[1]-w_1-\frac{h^2}2C_fMw_1\right)\nonumber\\\label{p1}
P^h[1]=&e^{-vh}p-\frac{h^2}2C_fMP^h[1]-h\Big{(}1+\frac{vh}2\Big{)}e^{-vh}f(0)
-\Big{(}1+\frac{vh}2\Big{)}e^{-vh}\sigma\Delta_{1}W,\\\label{q1}
Q^h[1]=&L_hP^h[1]+\frac{h^2}2Mf(0)+\frac{h}2M\sigma\Delta_{1}W\\
=&L_hP^h[1]+\frac{h}2\Big{(}1+\frac{vh}2\Big{)}^{-1}e^{vh}M\left(e^{-vh}p-P^h[1]-\frac{h^2}2C_fMP^h[1]\right)\nonumber.
\end{align}
Noticing that \eqref{y2} and \eqref{q1} form a linear system, from which we can get the solution $P^h[1]$ and $Q^h[1]$ based on the positive definite coefficient matrix. Then $\Delta_2W$ and $\Delta_1W$ can be uniquely determined by \eqref{y1} and \eqref{p1} respectively.
Condition \ref{min} $(\romannumeral1)$ is then ensured according to the property that Brownian motions hit a cylinder set with positive probability. 
For Condition \ref{min} $(\romannumeral2)$, from \eqref{mild}, we can find out that $P^h[1]$ has a $C^{\infty}$ density based on the facts $\Delta_1W$ has a $C^{\infty}$ density, $\sigma$ is full rank and $D(q)$ is positive definite for any $q\in\R^d$. Thus, $Q^h[1]$ also has a $C^{\infty}$ density, and Theorem \ref{7.3} is applied to complete the proof.
\end{proof}

\section{Approximate error}
In this section, we turn to consider the weak convergence order of (\ref{3.8.11}) by investigating the local convergence error first. Furthermore, based on the local convergence error and the hypoelliptic setting \eqref{hypoelliptic}, we can also get the approximate error of the ergodic limit. Denote the exact solution of (\ref{1.1}) and the numerical solution by $Z(t)=(P(t)^\top,Q(t)^\top)^\top$ and $Z_n=(P^h[n]^\top,Q^h[n]^\top)^\top,$ respectively. Next theorem gives that the moments of (\ref{1.1}) are uniformly bounded, whose proof is in the same procedure as Lemma 3.3 in \cite{MSH02}.

\begin{tm}
\label{th4.1.1}
Let Assumption \ref{ap1} holds, then for any $k\in\mathbb{N}_{+},$ the $k$-th moments of $P(t)$ and $Q(t)$ are uniformly bounded with respect to $t\in\mathbb{R}_{+}.$
\end{tm}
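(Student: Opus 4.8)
The plan is to exhibit a Lyapunov function for \eqref{1.1} whose expectation relaxes to a bounded level uniformly in time, and then to bootstrap this to arbitrary powers. I would reuse the function already appearing in the proof of Theorem \ref{ergodic},
\begin{align*}
V(z)=\frac12\|p\|^2+F(q)+\frac v2 p^\top q+\frac{v^2}4\|q\|^2+1,\qquad z=(p^\top,q^\top)^\top,
\end{align*}
and first record that it is coercive: since $F\ge0$ by Assumption \ref{ap1}(i) and the quadratic part $\frac12\|p\|^2+\frac v2 p^\top q+\frac{v^2}4\|q\|^2$ is positive definite (its coefficient matrix $\bigl(\begin{smallmatrix}1 & v/2\\ v/2 & v^2/2\end{smallmatrix}\bigr)$ has positive trace and determinant $v^2/4$), there is $c_0>0$ with $V(z)\ge c_0\|z\|^2+1$. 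Consequently $\|z\|^{2k}\le CV(z)^k$, so it suffices to bound $\mathbf E[V(Z(t))^k]$ uniformly in $t$ for every $k$; all even moments of $\|Z(t)\|$ then follow, and the remaining moments of $P$ and $Q$ by H\"older's inequality.

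The technical heart is the drift inequality $\mathcal L V\le -cV+C$ for the generator $\mathcal L$ of \eqref{1.1} and suitable $c>0$, $C$. I would compute
\begin{align*}
\mathcal L V=b_p^\top\partial_pV+b_q^\top\partial_qV+\tfrac12{\rm Tr}(\sigma\sigma^\top),\qquad b_p=-f(q)-vp,\quad b_q=Mp,
\end{align*}
noting that $\partial_p^2V=I_d$ renders the diffusion contribution the constant $\frac12{\rm Tr}(\sigma\sigma^\top)$. After the indefinite cross terms cancel (this is where the quadratic and bilinear parts of $V$ must be $M$-weighted so that $\partial_qV$ pairs cleanly with $Mp$; for $M=I$ this is immediate as in \cite{MSH02}), one is left with a term proportional to $-q^\top f(q)$, which is precisely the quantity controlled by Assumption \ref{ap1}(ii). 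Substituting that bound converts $-\tfrac v2 q^\top f(q)$ into $-v\beta F(q)$, a negative multiple of $\|q\|^2$, and the constant $+v\alpha$; the residual state-dependent terms assemble into a quadratic form in $(p,q)$ whose coefficient matrix is made negative semidefinite \emph{exactly} by the choice $v^2\frac{\beta(2-\beta)}{8(1-\beta)}$ in Assumption \ref{ap1}(ii)—its determinant vanishes identically. This yields $\mathcal L V\le -cV+C$ with, e.g., $c=v\beta$.

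To reach the $k$-th moments I would apply $\mathcal L$ to $V^k$. By the chain rule for the generator,
\begin{align*}
\mathcal L(V^k)=kV^{k-1}\mathcal LV+\frac{k(k-1)}2V^{k-2}(\partial_pV)^\top\sigma\sigma^\top(\partial_pV),
\end{align*}
and since $\|\partial_pV\|^2=\|p+\frac v2 q\|^2\le C'V$ the last term is bounded by $CV^{k-1}$. Combined with the drift inequality and Young's inequality $V^{k-1}\le\varepsilon V^{k}+C_\varepsilon$ (licit because $V\ge1$), this gives $\mathcal L(V^k)\le -c_kV^k+C_k$ for some $c_k>0$. Finally I would apply the Dynkin/It\^o formula to $V(Z(t))^k$ to obtain $\frac{d}{dt}\mathbf E[V(Z(t))^k]\le -c_k\mathbf E[V(Z(t))^k]+C_k$, whence Gronwall gives $\mathbf E[V(Z(t))^k]\le V(z)^k+C_k/c_k$ uniformly in $t$, and the theorem follows.

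The main obstacle I expect is the drift computation of the second paragraph: tracking all first- and second-order cross terms for a general positive definite $M$, choosing the $M$-weighted version of $V$ that makes them cancel, and verifying that Assumption \ref{ap1}(ii) is exactly strong enough to render the residual quadratic form negative semidefinite. A secondary, more routine, difficulty is the rigorous justification of the Dynkin formula: the a priori integrability needed is the very thing the Lyapunov inequality establishes, so one first applies the argument to the stopped process $V(Z(t\wedge\tau_R))^k$ with exit times $\tau_R=\inf\{t:\|Z(t)\|\ge R\}$, uses $\mathcal L(V^k)\le C_k$ to deduce non-explosion ($\tau_R\to\infty$ a.s.), and then passes to the limit via Fatou to recover the differential inequality.
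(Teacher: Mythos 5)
Your proposal is correct and takes essentially the same route as the paper: the paper gives no written-out proof, stating only that it proceeds ``in the same procedure as Lemma 3.3 in \cite{MSH02}'', which is precisely your argument --- the Lyapunov function $V(z)=\frac12\|p\|^2+F(q)+\frac v2 p^\top q+\frac{v^2}4\|q\|^2+1$, the drift inequality $\mathcal{L}V\le -cV+C$ obtained from Assumption \ref{ap1}(ii), the extension to powers $V^k$ via the generator chain rule, and Dynkin/Gronwall with localization to get bounds uniform in $t\in\mathbb{R}_+$. Your observation that for a general positive definite $M$ the quadratic and bilinear parts of $V$ must be $M$-weighted (e.g.\ $\frac12 p^\top Mp+\frac v2 p^\top q+\frac{v^2}4 q^\top M^{-1}q$) so that the cross terms cancel is a genuine point of care that the paper itself glosses over by citing \cite{MSH02}, where $M=I$.
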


Before proving the main convergence theorem, we first show the boundedness of the numerical solution to (\ref{3.8.11}) in the following theorem.
\begin{tm}
\label{th4.1.2}
Assume that the coefficient $f$ of equation (\ref{1.1}) is globally Lipschitz and satisfies the linear growth condition, i.e.,
\begin{equation}
\label{4.13.2}
\|f(q_1)-f(q_2)\|\leq L\|q_1-q_2\|,\quad \|f(\tilde q)\|\leq C_f(1+\|\tilde q\|)
\end{equation}
for some constants $L\geq 0$ and $C_f\geq 0$, and any $q_1,q_2,\tilde q\in \mathbb{R}^d.$ 
Then there exists a positive constant $h_0$ such that for any $h\leq h_0,$ it holds
$$\sup\limits_{n\in\{1,\cdots,N\}}\mathbf{E}\left[\|P^h[n]\|^{k}+\|Q^h[n]\|^{k}\right]<\infty.$$
%
\end{tm}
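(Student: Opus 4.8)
The plan is to establish a one-step recursion of the form
\begin{equation*}
\mathbf{E}\big[\|Z_{n+1}\|^{k}\,\big|\,\mathcal{F}_{t_n}\big]\le(1+Ch)\|Z_n\|^{k}+Ch,
\end{equation*}
with $C$ independent of $h$ and $n$, and then to close the argument by a discrete Gronwall iteration over the finite horizon $nh\le Nh=T$. It is worth stressing that the one-step bound $\mathbf{E}\|Z_1\|^{\gamma}\le C_2(1+\|z\|^{\gamma})$ from Condition \ref{lya}~(i) is by itself insufficient: iterating a prefactor $C_2>1$ would produce an uncontrolled factor $C_2^{N}$ with $N=T/h$. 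The crux is therefore to sharpen the prefactor of $\|Z_n\|^{k}$ to $1+O(h)$, which requires exploiting both the contractivity coming from $e^{-vh}$ and the zero-mean structure of the noise increments.

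First I would work from the explicit (mild) form \eqref{mild} of the scheme (with $p,q,\Delta_1W$ replaced by $P^h[n],Q^h[n],\Delta_{n+1}W$), so that $P^h[n+1]$ is well defined and $\mathcal{F}_{t_{n+1}}$-measurable for $h\le h_0$. Since $f$ is globally Lipschitz, $\|\nabla^2F(q)\|\le L$, hence $\big\|\tfrac{h^2}2\nabla^2F(q)M\big\|\le Ch^2<1$ for $h$ small, and a Neumann-series bound gives $\|D(Q^h[n])\|\le 1+Ch^2$; combined with $e^{-vh}\le 1-vh+\tfrac{v^2h^2}2$ this yields $e^{-vh}\|D(Q^h[n])\|\le 1$ provided $h\le h_0$. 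I would then split $Z_{n+1}=F_n(Z_n)+\Sigma_n\Delta_{n+1}W$, where $F_n(Z_n)$ collects the $\mathcal{F}_{t_n}$-measurable terms and $\Sigma_n$ is the ($\mathcal{F}_{t_n}$-measurable, uniformly bounded) noise coefficient obtained after substituting the $P$-update into the $Q$-update. Using $e^{-vh}\|D(Q^h[n])\|\le1$, the linear growth of $f$, and $\|L_h\|\le Ch$, a direct estimate of the two components gives the pathwise bound $\|F_n(Z_n)\|\le(1+Ch)\|Z_n\|+Ch$ together with $\|\Sigma_n\|\le C$.

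Next, for even $k=2m$ I would compute $\mathbf{E}[\|Z_{n+1}\|^{2m}\,|\,\mathcal{F}_{t_n}]$ by expanding $\|Z_{n+1}\|^{2}=\|F_n(Z_n)\|^{2}+2F_n(Z_n)^{\top}\Sigma_n\Delta_{n+1}W+\|\Sigma_n\Delta_{n+1}W\|^{2}$ and raising to the $m$-th power. Since $\Delta_{n+1}W$ is independent of $\mathcal{F}_{t_n}$ with vanishing odd moments and $\mathbf{E}\|\Delta_{n+1}W\|^{2j}\le Ch^{j}$, every term carrying an odd total power of $\Delta_{n+1}W$ drops out, and the surviving correction terms are of order $\|F_n(Z_n)\|^{2(m-i)}h^{i}$ for $i\ge1$. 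Bounding each of these by Young's inequality, $h^{i}\|F_n(Z_n)\|^{2(m-i)}\le Ch\big(1+\|F_n(Z_n)\|^{2m}\big)$ for $h\le 1$, and inserting $\|F_n(Z_n)\|\le(1+Ch)\|Z_n\|+Ch$, I obtain exactly $\mathbf{E}[\|Z_{n+1}\|^{2m}\,|\,\mathcal{F}_{t_n}]\le(1+Ch)\|Z_n\|^{2m}+Ch$. Taking full expectation and iterating yields $\mathbf{E}\|Z_n\|^{2m}\le e^{Cnh}(\|Z_0\|^{2m}+1)\le e^{CT}(1+\|Z_0\|^{2m})$ for all $n\le N$, which is finite since $Z_0$ is deterministic; odd moments then follow from Lyapunov's inequality via $\|Z_n\|^{k}\le 1+\|Z_n\|^{2\lceil k/2\rceil}$.

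The main obstacle is to guarantee that the effective one-step amplification factor is $1+O(h)$ rather than a constant strictly greater than $1$: this forces the simultaneous use of the strict contraction $e^{-vh}\|D(Q^h[n])\|\le1$ (which fixes the smallness threshold $h_0$) and of the mean-zero, $\mathcal{F}_{t_n}$-adapted structure of $\Sigma_n\Delta_{n+1}W$, so that the linear-in-noise contribution vanishes in conditional expectation and only $O(h)$ corrections remain. Once the $(1+Ch)\|Z_n\|^{2m}+Ch$ recursion is in place, the finite-horizon conclusion is routine.
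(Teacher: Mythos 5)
Your proposal is correct, and it rests on the same two structural facts as the paper's proof: the one-step increment of the scheme is pathwise of size $O(\sqrt h)(1+\|Z_n\|)$, while its conditional mean is of size $O(h)(1+\|Z_n\|)$. The difference is in how the conclusion is drawn from these facts. The paper establishes exactly these two one-step bounds (via the implicit-equation estimate $\|P^h[1]-p\|\le C(1+\|z\|)(\|\xi\|\sqrt h+h)+\tfrac{h^2}{2}L\|M\|\,\|P^h[1]-p\|$, absorbing the last term for $h\le h_0$, and the corresponding bound on $\|\mathbf{E}(P^h[1]-p)\|$) and then simply cites Lemma 9.1 of Milstein (1988), which converts such one-step estimates into uniform moment bounds over $nh\le T$. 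You instead re-prove that propagation lemma in this specific setting: you exploit the fact that, conditional on $\mathcal{F}_{t_n}$, the scheme is affine in $\Delta_{n+1}W$ with bounded $\mathcal{F}_{t_n}$-measurable noise coefficient, so odd powers of the noise vanish under conditional expectation, which yields the recursion $\mathbf{E}[\|Z_{n+1}\|^{2m}\mid\mathcal{F}_{t_n}]\le(1+Ch)\|Z_n\|^{2m}+Ch$ and then discrete Gronwall. This buys a self-contained argument that needs no external lemma, makes explicit why iterating Condition 3.1(i) with a constant prefactor $C_2>1$ would fail (the $C_2^N$ blow-up you point out), and your Neumann-series bound $\|D(q)\|\le 1+Ch^2$ uses only $\|\nabla^2F\|\le L$, i.e.\ no convexity of $F$; the price is length, where the paper's citation of Milstein's lemma disposes of the propagation step in one line. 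Your final reduction of odd moments to even ones via $\|Z_n\|^{k}\le 1+\|Z_n\|^{2\lceil k/2\rceil}$ is also fine and mirrors what the cited lemma delivers directly.
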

\begin{proof}
For any fixed initial value $z=(p^\top,q^\top)^\top,$ random variable $\xi:=\xi^1$ and $h,$ we have based on \eqref{3.8.11} that 
\begin{align*}
\|P^h[1]-p\|\leq&|e^{-vh}-1|\|p\|+ h\Big{(}1+\frac{vh}2\Big{)}\|f(q)\|+\sqrt{h}\Big{(}1+\frac{vh}2\Big{)}\|\sigma\xi\|\\
&+\frac{h^2}2\|\nabla^2F(q)\|\|M\|\|p\|+\frac{h^2}2\|\nabla^2F(q)\|\|M\|\|P^h[1]-p\|.
\end{align*}
Denote $C_v:=1+\frac{vh}{2}.$ Using the global Lipschitz condition and mean value theorem, there exists some $\theta\in (0,1)$ such that 
\begin{align*}
\|P^h[1]-p\|\leq& |-vhe^{-v\theta h}|\|p\|+ hC_f(1+\|z\|)+\sqrt hC_v\|\sigma\xi\|\\
&+\frac{h^2}2L\|M\|\|z\|+\frac{h^2}2L\|M\|\|P^h[1]-p\|\\
\leq &C(1+\|z\|)(\|\xi\|\sqrt{h}+h)+L\|M\|\|P^h[1]-p\|\frac{h^2}{2}.
\end{align*}
It is obvious that there exists a positive constant $h_0$ such that for any $h\leq h_0$,$$L\|M\|\frac{h^2}{2}\leq \frac12.$$ 
It then yields  
\begin{align*}
\|P^h[1]-p\|\leq& 2C(1+\|z\|)(\|\xi\|\sqrt{h}+h).
\end{align*}
On the other hand,
for $h\leq h_0,$ we have
\begin{align*}
&\|\mathbf{E}(P^h[1]-p)\|\\
\leq& \left\|(e^{-vh}-1)p-\frac{h^2}2\nabla^2F(q)Mp-hC_ve^{-vh}f(q)\right\|+\left\|\frac{h^2}2\nabla^2F(q)M\mathbf{E}(P^h[1]-p)\right\|\\
\leq&vh\|p\|+hL\|M\|\|p\|+hC_fC_v(1+\|z\|)+\frac{h^2}2L\|M\|\|\mathbf{E}(P^h[1]-p)\|,
\end{align*}
which leads to
\begin{align*}
\|\mathbf{E}(P^h[1]-p)\|\leq C(1+\|z\|)h.
\end{align*}
Based on the estimate of $P^h[1]-p,$ similarly, we have
\begin{align*}
\|Q^h[1]-q\|\leq C(1+\|z\|)(\|\xi\|\sqrt{h}+h),\quad \|\mathbf{E}(Q^h[1]-q)\|
\leq C(1+\|z\|)h.
\end{align*}
We can conclude that, for $Z_1=(P^h[1]^\top,Q^h[1]^\top)^\top$,
\begin{align}
\label{6.1}
\|Z_1-z\|\leq & C(\|\xi\|+\sqrt h)(1+\|z\|)\sqrt h
\leq C(\|\xi\|+1)(1+\|z\|)\sqrt{h}.
\end{align} 
Thus, we complete the proof according to Lemma 9.1 in \cite{Milstein1988}.
\end{proof}

Based on the above preliminaries, our result concerning the weak convergence order of the proposed
scheme is as follows.
\begin{tm}
\label{th4.2}
Under the assumptions in Theorem \ref{th4.1.2}, 
the proposed scheme (\ref{3.8.11}) is of weak order 2. More precisely,
\begin{align*}
\left|\mathbf{E}\psi\left(P(T),Q(T)\right)-\mathbf{E}\psi\left(P^h[N],Q^h[N]\right)\right|=O(h^{2})
\end{align*}
for all $\psi\in C_P^6(\mathbb{R}^{2d},\mathbb{R})$  and $T=Nh.$
\end{tm}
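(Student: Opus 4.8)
The plan is to deduce the global weak order from a one-step (local) weak error estimate via Milstein's fundamental theorem relating local and global weak errors (see \cite{Milstein1988}), whose hypotheses are precisely the uniform moment bounds on the exact and numerical solutions supplied by Theorem \ref{th4.1.1} and Theorem \ref{th4.1.2}, together with a local error of order $h^{3}$. Writing $\bar Z_{t_n,z}(t_{n+1})$ for one step of (\ref{3.8.11}) started from $z$ at time $t_n$ and $Z_{t_n,z}(t_{n+1})$ for the exact flow of (\ref{1.1}) over the same interval, I would first establish
$$\left|\mathbf{E}\,\psi\big(Z_{t_n,z}(t_{n+1})\big)-\mathbf{E}\,\psi\big(\bar Z_{t_n,z}(t_{n+1})\big)\right|\le C\big(1+\|z\|^{\kappa}\big)h^{3}$$
for every $\psi\in C_P^6(\R^{2d},\R)$, uniformly in $t_n\in[0,T]$. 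Summing these one-step errors through the standard telescoping identity and controlling the accumulated terms by the moment bounds then upgrades the local rate $h^{3}$ to the global rate $h^{2}$ over $N=T/h$ steps.

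For the local estimate I would exploit the construction of Section \ref{sec3} instead of expanding (\ref{3.8.11}) afresh. Through the deterministic change of variables $P_i^h[n]=e^{-vt_n}x_i$, $Q_i^h[n]=y_i$ and the auxiliary coordinates $X_{d+1},Y_{d+1}$, the scheme (\ref{3.8.11}) is exactly the generating-function scheme (\ref{3.8.9}) for the autonomous system (\ref{1.1.4.1}); and the functions $H_i^{[j]}$ were chosen so that every term of order $h$ and $h^2$ on the right-hand side of (\ref{EPD}) cancels, which is the assertion (\ref{3.6}) with $k=k'=1$, i.e. the one-step weak error of (\ref{3.8.9}) for (\ref{1.1.4.1}) is $O(h^{3})$. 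To transfer this to the physical variables I would set $\phi_t(X,Y):=\psi\big(e^{-vt}X_1,\dots,e^{-vt}X_d,Y_1,\dots,Y_d\big)$ and note that both the exact and the numerical physical solutions are the images of the corresponding autonomous solutions under $(X,Y)\mapsto(e^{-vt}X,Y)$, so that applying the autonomous local estimate to the test function $\phi_{t_{n+1}}$ yields the physical local estimate for $\psi$. Since $e^{-vt}\le1$ for $t\ge0$, the family $\{\phi_t\}_{t\ge0}$ is bounded in $C_P^6$ uniformly in $t$.

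The main technical obstacle is to control the remainder terms $R_4,\dots,R_7$ and the coefficients $e_1,\dots,e_7$ in (\ref{EPD}) with the correct polynomial dependence on $\|z\|$, so that the surviving local error genuinely takes the form $C(1+\|z\|^{\kappa})h^{3}$ demanded by Milstein's theorem. Each such remainder is a product of multiple stochastic integrals of total order at least $\tfrac{5}{2}$ multiplied by derivatives of $F$ and of the $H_i^{[j]}$; under the global Lipschitz and linear growth hypotheses (\ref{4.13.2}) on $f$ and the smoothness $\psi\in C_P^6$, each product has finite expectation bounded by $C(1+\|z\|^{\kappa})h^{3}$, the power $h^{3}$ arising from the lowest-order surviving integral. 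A secondary point is the bookkeeping of the factors $C_1=e^{vt_n}$ and $C_2=e^{-vt_n}$ entering the autonomous expansion: $C_2\le1$ is harmless, while the growth of $C_1$ is either absorbed by the compensating factor $e^{-vt_{n+1}}$ from the transformation or, over the finite horizon $[0,T]$, subsumed into a constant depending only on $T$, which is admissible. With this polynomial-growth local bound in hand, the moment estimates of Theorems \ref{th4.1.1} and \ref{th4.1.2} ensure that the error accumulated over $N=T/h$ steps is $O(h^{2})$, completing the proof.
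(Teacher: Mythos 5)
Your global skeleton --- a one-step weak error of order $h^3$ with polynomial dependence on the initial data, upgraded to global order $h^2$ by Milstein's fundamental theorem using the moment bounds of Theorems \ref{th4.1.1} and \ref{th4.1.2} --- is exactly the paper's skeleton, and your transfer device (the time-indexed test functions $\phi_t(X,Y)=\psi(e^{-vt}X_1,\dots,e^{-vt}X_d,Y_1,\dots,Y_d)$, uniformly bounded in $C_P^6$ since $e^{-vt}\le1$, with $Y_{d+1}$ evolving deterministically) is a clean and correct way to pass between the autonomous system (\ref{1.1.4.1}) and the physical variables.

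The genuine gap is in how you obtain the local estimate. You take it from Section \ref{sec3}: the functions $H_i^{[j]}$ were chosen so that the $h$- and $h^2$-terms in (\ref{EPD}) cancel, ``which is the assertion (\ref{3.6})''. But Section \ref{sec3} is a constructive, formal derivation --- it is how the scheme was designed, not a proof of (\ref{3.6}). Concretely: (i) the expansion (\ref{EPD}) ends with ``$+\cdots$'', hiding the third- through sixth-order terms of the Taylor expansion of $\phi$, whose $O(h^3)$ contributions are never examined there; (ii) the remainders $R_4,\dots,R_7$ and the coefficients $e_1,\dots,e_7$ are defined but never bounded, in particular not with the polynomial growth in $\|z\|$ that Milstein's theorem requires --- your one-sentence justification (``each product \dots has finite expectation bounded by $C(1+\|z\|^{\kappa})h^{3}$'') is precisely the statement that needs proof, not an argument for it, and it must also cope with the fact that the remainders are tails of an infinite series whose coefficients contain the implicit arguments $X(h)$, resp.\ $X^G$; (iii) the whole comparison presupposes the generating-function representation (\ref{1.1.6}) of the exact flow and the convergence of the series (\ref{1.1.8}), which the paper only cites and never establishes with quantitative remainder control. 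This is why the paper's proof of Theorem \ref{th4.2} does not reuse Section \ref{sec3} at all: it abandons the generating function and re-derives the local error from scratch in the physical variables, expanding the exact solution by It\^{o}'s formula ((\ref{4.3})--(\ref{4.4}), with explicit remainders satisfying $\mathbf{E}\|\delta_i\|=O(h^3)$ and $\mathbf{E}\|\delta_i\|^2=O(h^5)$) and the implicit scheme by Taylor expansion ((\ref{4.5})--(\ref{4.6})), then matching all mixed moments $(P(h)-p)^{i_1}(Q(h)-q)^{i_2}$ of total order up to $5$ and bounding the order-$6$ Taylor remainder of $\psi$ via Theorems \ref{th4.1.1} and \ref{th4.1.2}. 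If you wish to keep your route, you must carry out that analysis (or an equivalent rigorous bound on the series remainders) for the autonomous system; as written, the heart of the proof is deferred to a computation that was never made rigorous.
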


\begin{proof}
Without loss of generality, we consider the case of $d=1.$ 
Based on It\^o's formula, Theorem \ref{th4.1.1} and \ref{th4.1.2}, we obtain
\begin{equation*}
\begin{split}
P(h)=&p-\int_{0 }^{h }\left(f(Q(s))+vP(s)\right)ds-\sum\limits_{r=1}^m\int_{0 }^{h }\sigma_rdW_r(s)\\
=&p-\int_{0 }^{h }\left(f(q)+\int_{0 }^{s}\nabla^2F(Q(\theta))MP(\theta)d\theta\right)ds-\sum\limits_{r=1}^m\int_{0 }^{h}\sigma_rdW_r(s)\\
&-v\int_{0 }^{h }\left(p-\int_{0 }^{s}f(Q(\theta))d\theta-\int_{0 }^{s}vP(\theta)d\theta-\sum\limits_{r=1}^m\sigma_rdW_r(\theta)\right)ds,
\end{split}
\end{equation*}
which leads to 
\begin{equation}
\begin{split}
\label{4.3}
P(h)=&p-f(q)h-vph-\frac{1}{2}\nabla^2F(q)Mph^2-\sum\limits_{r=1}^m\int_{0 }^{h}\sigma_rdW_r(s)\\
&+\frac{1}{2}vf(q)h^2+\frac{1}{2}v^2ph^2+v\sum\limits_{r=1}^m\int_{0 }^{h}\int_{0}^{s}\sigma_rdW_r(\theta)ds+\delta_1,
\end{split}
\end{equation}
where $\mathbf E\|\delta_1\|=O(h^3)$ and $\mathbf{E}\|\delta_1\|^2=O(h^5).$
Analogously, it also holds that
\begin{equation}
\begin{split}
\label{4.4}
Q(h)=&q+\int_{0 }^{h }M\left(p-\int_{0 }^sf(Q(\theta))d\theta-v\int_{0 }^sP(\theta)d\theta-\sum\limits_{r=1}^m\int_{0 }^s\sigma_rdW_r(\theta)\right)ds\\
=&q+Mph-\frac{1}{2}f(q)h^2-\frac{1}{2}vMph^2-\sum\limits_{r=1}^mM\sigma_r\int_{0}^{h}\int_{0}^{s}dW_r(\theta)ds+\delta_2
\end{split}
\end{equation}
with $\mathbf E\|\delta_2\|=O(h^3)$ and $\mathbf{E}\|\delta_2\|^2=O(h^5).$ For  (\ref{3.8.11}), taking Taylor expansion to $P^h[1]$ and $Q^h[1]$ at $(p,q),$ we obtain
\begin{equation}
\begin{split}
\label{4.5}
P^h[1]=&p-f(q)h-vph-\frac{1}{2}\nabla^2F(q)Mph^2-\sum\limits_{r=1}^m\sigma_r\Delta_1W\\
&+\frac{1}{2}vf(q)h^2+\frac{1}{2}v^2ph^2+\frac{1}{2}v\sum\limits_{r=1}^m\sigma_rh\Delta_1W+\delta_3,
\end{split}
\end{equation}
\begin{equation}
\begin{split}
\label{4.6}
Q^h[1]=q+Mph-\frac{1}{2}f(q)h^2-\frac{1}{2}vMph^2-\frac{1}{2}\sum\limits_{r=1}^mM\sigma_rh\Delta_1W+\delta_4,
\end{split}
\end{equation}
where $\mathbf E\|\delta_i\|=O(h^3)$ and $\mathbf{E}\|\delta_i\|^2=O(h^5)$ with $i=3,4.$ Due to (\ref{4.3}) and (\ref{4.5}), we know
\begin{equation*}
\begin{split}
P(h)-P^h[1]=v\sum\limits_{r=1}^m\sigma_r\left(\int_{0 }^{h}\int_{0}^{s}dW_r(\theta)ds-\frac{1}{2}h\Delta_1W\right)+(\delta_1-\delta_3),
\end{split}
\end{equation*}
and thus $\|\mathbf{E}(P(h)-P^h[1])\|=O(h^3).$ Similarly, based on (\ref{4.4}) and (\ref{4.6}), we have $\|\mathbf E(Q(h)-Q^h[1])\|= O(h^3).$
For $i=2,3,4,5,$ it shows
\begin{align*}
&\left\|\mathbf{E}\left[(P(h)-p)^i-\mathbf (P^h[1]-p)^i\right]\right\|\leq Ch^3+O(h^4),\\
&\left\|\mathbf E\left[(Q(h)-q)^i-(Q^h[1]-q)^i\right]\right\|\leq Ch^3+O(h^4).
\end{align*}
Moreover, for $i_1+i_2=2,3,4,5$ and $i_1\geq 1$
\begin{align*}
\left\|\mathbf E\left[(P(h)-p)^{i_1}(Q(h)-q)^{i_2}-(P^h[1]-p)^{i_1}(Q^h[1]-q)^{i_2}\right]\right\|\leq Ch^3+O(h^4).
\end{align*}
By Taylor expansion and mean value theorem, we obtain
\begin{equation}
\begin{split}
\label{4.10}
&\left|\mathbf{E}\left[\psi(P(h),Q(h))-\psi(P^h[1],Q^h[1])\right]\right|\\
\leq&\left|\frac{\partial \psi}{\partial p}(p,q)\right|\left\|\mathbf{E}(P(h)-P^h[1])\right\|+\left|\frac{\partial \psi}{\partial q}(p,q)\right|\left\|\mathbf{E}(Q(h)-Q^h[1])\right\|\\
&+\sum\limits_{j=2}^{5}\sum\limits_{i=0}^j\left|\frac{\partial^j \psi(p,q)}{\partial p^i\partial q^{j-i}}\right|\left\|\mathbf{E}[(P(h)-p)^{i}(Q(h)-q)^{j-i}-(P^h[1]-p)^{i}(Q^h[1]-q)^{j-i}]\right\|\\
&+\sum\limits_{i=0}^{6}\mathbf{E}\left(\left|\frac{\partial^6 \psi(p+\theta_1P(h),q+\theta_1Q(h))}{\partial p^i\partial q^{6-i}}\right|\left\|(P(h)-p)^{i}(Q(h)-q)^{6-i}\right\|\right)\\
&+\sum\limits_{i=0}^{6}\mathbf{E}\left(\left|\frac{\partial^6 \psi(p+\theta_2P^h[1],q+\theta_2Q^h[1])}{\partial p^i\partial q^{6-i}}\right|\left\|(P^h[1]-p)^{i}(Q^h[1]-q)^{6-i}\right\|\right)
\end{split}
\end{equation}
with constants $0\leq\theta_1\leq 1$ and $0\leq\theta_2\leq 1.$ 
Here, based on \eqref{4.3}--\eqref{4.6}, Theorem \ref{th4.1.1} and Theorem \ref{th4.2}, we derive
\begin{align*}
&\mathbf{E}\left(\left|\frac{\partial^6\psi(p+\theta_1P(h),q+\theta_1Q(h))}{\partial p^i\partial q^{6-i}}\right|\left\|(P(h)-p)^{i}(Q(h)-q)^{6-i}\right\|\right)\\
\leq&C\left(\mathbf{E}\left\|(P(h)-p)^{2i}(Q(h)-q)^{12-2i}\right\|\right)^{\frac12}\leq Ch^{6-\frac{i}2},
\end{align*}
where we also use the fact
$\psi\in C^6_P(\mathbb{R}^{2d},\mathbb{R}),$ analogously,
\begin{align*}
&\mathbf{E}\left(\left|\frac{\partial^6 \psi(p+\theta_2P^h[1],q+\theta_2Q^h[1])}{\partial p^i\partial q^{6-i}}\right|\left\|(P^h[1]-p)^{i}(Q^h[1]-q)^{6-i}\right\|\right)=O(h^{6-\frac{i}2})
\end{align*}
for $0\leq i\leq 6.$
Finally, we deduce
\begin{equation}\label{localerror}
\left|\mathbf{E}\psi(P(h),Q(h))-\mathbf{E}\psi(P^h[1],Q^h[1])\right|\leq O(h^3),
\end{equation}
which, together with Theorem 9.1 in \cite{Milstein1988}, yields global weak order 2 for the proposed scheme (\ref{3.8.11}).
\end{proof}

According to above theorem and the condition \eqref{hypoelliptic}, we can get that the temporal average of the proposed scheme \eqref{3.8.11} is a proper approximation of the ergodic limit $\int_{\mathbb{R}^{2d}}\psi d\mu$. 
\begin{tm}\label{erlimit}
For any $\psi\in C_b^{6}(\mathbb{R}^{2d},\mathbb{R})$ and any initial values, under assumptions in Theorems \ref{ergodic} and \ref{th4.2}, the scheme \eqref{3.8.11} satisfies that
\begin{align*}
\left|\frac{1}{N}\sum_{n=1}^N\mathbf{E}\psi(P^h[n],Q^h[n])-\int_{\mathbb{R}^{2d}}\psi d\mu\right|\le C\left(h^2+\frac1T\right).
\end{align*}
\end{tm}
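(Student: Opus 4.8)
The plan is to interpolate the exact Kolmogorov semigroup of \eqref{1.1} between the numerical temporal average and the ergodic limit, and then to control the two resulting pieces by the weak order two estimate of Theorem~\ref{th4.2} and by the exponential mixing of \eqref{1.1}. Write $Z(t)=(P(t)^\top,Q(t)^\top)^\top$, $Z_n=(P^h[n]^\top,Q^h[n]^\top)^\top$, $t_n=nh$ and $T=Nh$. First I would split
\begin{align*}
\left|\frac1N\sum_{n=1}^N\mathbf{E}\psi(Z_n)-\int_{\R^{2d}}\psi\,d\mu\right|
\le&\frac1N\sum_{n=1}^N\left|\mathbf{E}\psi(Z_n)-\mathbf{E}\psi(Z(t_n))\right|\\
&+\left|\frac1N\sum_{n=1}^N\mathbf{E}\psi(Z(t_n))-\int_{\R^{2d}}\psi\,d\mu\right|.
\end{align*}

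The second term records only the continuous dynamics. By the hypoelliptic setting \eqref{hypoelliptic}, Assumption~\ref{ap1} and the Lyapunov structure underlying Theorem~\ref{th4.1.1}, the system \eqref{1.1} is exponentially mixing (see \cite{MSH02,MST10}): there exist $\lambda>0$ and $C>0$, independent of $h$, with $\left|\mathbf{E}^z\psi(Z(t))-\int_{\R^{2d}}\psi\,d\mu\right|\le Ce^{-\lambda t}(1+V(z))$ for every $t\ge0$, where $V$ is the Lyapunov function from the proof of Theorem~\ref{ergodic}. Summing the geometric series $\sum_{n=1}^N e^{-\lambda t_n}\le(1-e^{-\lambda h})^{-1}$ and using $1-e^{-\lambda h}\ge C\lambda h$ for small $h$ yields a bound of order $\frac1{N\lambda h}=\frac{C}{\lambda T}$, which produces the $\frac1T$ contribution.

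The first term is the crux, and the point is that it must be controlled \emph{uniformly in $n$} by $Ch^2$, rather than by the $t_n h^2$ that a naive accumulation of the $O(h^3)$ local error \eqref{localerror} would give. I would telescope through the backward semigroup $u(s,z):=\mathbf{E}^z\psi(Z(s))$. Using the Markov property $u(t_n-t_k,z)=\mathbf{E}^z u(t_n-t_{k+1},Z(h))$ together with the law of the one-step map $Z_1$, one writes
\begin{align*}
\mathbf{E}\psi(Z_n)-\mathbf{E}\psi(Z(t_n))
=\sum_{k=0}^{n-1}\mathbf{E}\Big[\mathbf{E}^{Z_k}u(t_n-t_{k+1},Z_1)-\mathbf{E}^{Z_k}u(t_n-t_{k+1},Z(h))\Big],
\end{align*}
so that each summand is precisely the one-step weak error of \eqref{localerror}, but with the test function $\psi$ replaced by $g_k:=u(t_n-t_{k+1},\cdot)$. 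Applying the local estimate \eqref{localerror} to $g_k$ and absorbing the polynomial growth factors by the uniform moment bounds of Theorems~\ref{th4.1.1} and \ref{th4.1.2}, each summand is bounded by $Ch^3 e^{-\lambda(t_n-t_{k+1})}$, provided the spatial derivatives of $u(s,\cdot)$ up to order six decay exponentially, $\sup_{1\le|\alpha|\le6}\|D^\alpha u(s,\cdot)\|_\infty\le Ce^{-\lambda s}$. Summing the geometric series then gives $\left|\mathbf{E}\psi(Z_n)-\mathbf{E}\psi(Z(t_n))\right|\le Ch^3(1-e^{-\lambda h})^{-1}\le Ch^2$ uniformly in $n$; averaging preserves this bound, and combining with the $\frac1T$ term finishes the proof.

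The main obstacle is exactly the derivative estimate $\sup_{1\le|\alpha|\le6}\|D^\alpha u(s,\cdot)\|_\infty\le Ce^{-\lambda s}$ invoked above: one must show that the Kolmogorov semigroup not only contracts to $\int\psi\,d\mu$ but does so with all spatial derivatives up to order six decaying exponentially and uniformly in $s$. This couples the hypoelliptic smoothing of \eqref{hypoelliptic} (which regularizes $u$ even for a merely bounded $\psi\in C_b^6$) with the exponential contraction of the flow, and requires the polynomial-growth constants to be dominated by the Lyapunov function; the references \cite{MSH02,MST10} supply the mixing and regularity needed. Once these bounds are in hand, the telescoping identity and the two geometric summations are routine.
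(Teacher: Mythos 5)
Your outline is essentially correct as an architecture, but note first that the paper does not really prove Theorem \ref{erlimit} at all: its entire proof is the remark that \eqref{3.8.11} satisfies the assumptions of Theorem 5.6 in \cite{MST10}, and that cited theorem is proved there by a different technique (solving the Poisson equation $\mathcal{L}\varphi=\psi-\int\psi\,d\mu$ and expanding $\varphi$ along the numerical chain), not by your semigroup interpolation. So yours is a genuinely different route: splitting off the continuous-time mixing error, then telescoping the numerical-versus-exact error through $u(s,z)=\mathbf{E}^z\psi(Z(s))$ and summing a geometric series so that the $O(h^3)$ local error \eqref{localerror} accumulates to $O(h^2)$ uniformly in $n$ rather than to $t_nh^2$; this is the classical mixing-semigroup argument, and you correctly identify why the naive accumulation fails. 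Two points decide whether your version actually closes. First, the estimate you yourself flag, $\sup_{1\le|\alpha|\le6}\|D^\alpha u(s,\cdot)\|_\infty\le Ce^{-\lambda s}$, is the whole difficulty: the noise in \eqref{1.1} is degenerate (it enters only the $P$ equation), so \cite{MSH02} provides geometric ergodicity only in a $V$-weighted total-variation norm for bounded measurable test functions, with no derivative information whatsoever; upgrading this to sixth-order derivative bounds with exponential decay requires hypoelliptic (Malliavin-type) smoothing estimates or precisely the Poisson-equation derivative bounds that \cite{MST10} develops. Deferring this to ``the references'' hides the same hard analysis that the paper hides by citing Theorem 5.6, so your proof is not more complete than the paper's, only more explicit about the skeleton. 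Second, your weights $(1+V(z))$ are evaluated along the numerical chain, so you need $\sup_{n}\mathbf{E}[V(Z_n)]<\infty$ uniformly in $T$; Theorem \ref{th4.1.2} does not supply this (its bound comes from a finite-horizon argument via Lemma 9.1 of \cite{Milstein1988}), but the contraction $\mathbf{E}[V(Z_{n+1})|\mathcal{F}_{t_n}]\le\alpha V(Z_n)+\beta$ established in Step 1 of the proof of Theorem \ref{ergodic} does, and is what you should invoke. What your approach buys is a self-contained proof architecture independent of the Poisson-equation machinery; what the paper's citation buys is that the one genuinely hard estimate has already been carried out elsewhere.
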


In fact, one can check that the assumptions in Theorem 5.6, \cite{MST10} are satisfied by \eqref{3.8.11}, and thus deduce this result.

\section{Numerical experiments}

The first example (Section \ref{sub5.1}) tests the numerical approximation by simulating a linear stochastic Langevin equation. In Section \ref{sub5.2}, numerical tests of conformal symplectic scheme for the nonlinear case are presented. In all the experiments, the expectation is approximated by taking average over 5000 realizations.

\subsection{A linear oscillator with damping}
\label{sub5.1}
Consider the following 2-dimensional stochastic Langevin equation
\begin{equation}\label{5.1}
\begin{split}
&dP=-aQdt-vPdt-\sigma dW(t),\quad P(0)=p,\\
&dQ=aPdt,\quad Q(0)=q,
\end{split}
\end{equation}
where $a,$ $v>0$ and $\sigma\neq 0$ are constants and $W(t)$ is a one-dimensional standard Wiener process. 
The solution to \eqref{5.1} possesses a unique invariant measure $\mu_1$:
\begin{align*}
d\mu_1=\rho_1(p,q)dpdq,
\end{align*}
where  $\rho_1(p,q)=\Theta\exp{\left(-\frac{av(p^2+q^2)}{\sigma^2}\right)}$
is known as the Boltzmann-Gibbs density and  $\Theta={\left(\int_{\mathbb{R}^2}\exp{\left(-\frac{av(p^2+q^2)}{\sigma^2}\right)}dpdq\right)}^{-1}$ is a renormalization constant.
The proposed scheme applied to \eqref{5.1} yields
\begin{equation}
\label{5.5}
\begin{split}
P_{n+1}=&e^{-vh}P_{n}-\frac{h^2}2a^2P_{n+1}-h\Big{(}1+\frac{vh}2\Big{)}e^{-vh}Q_{n}-\Big{(}1+\frac{vh}2\Big{)}e^{-vh}\sigma\Delta_{n+1}W,\\
Q_{n+1}=&Q_{n}+h\Big{(}1-\frac{vh}2\Big{)}e^{vh}aP_{n+1}+\frac{h^2}2a^2Q_{n}+\frac{h}2a\sigma\Delta_{n+1}W.
\end{split}
\end{equation}
We choose $p=3,$ $q=1,$ $v=2$ and $\sigma=0.5.$  Here we have taken the three different kinds of test functions (a) $\psi(P,Q)=\cos(P+Q),$ (b) $\psi(P,Q)=\exp{(-\frac{P^2}{2}-\frac{Q^2}{2})}$ and (c) $\psi(P,Q)=\sin(P^2+Q^2)$ as the test functions for weak convergence.

\begin{figure}[H]
\centering
\subfigure[$\psi(p,q)=\cos(p+q)$]{
\begin{minipage}{0.31\linewidth}
\includegraphics[width=4cm,height=4cm]{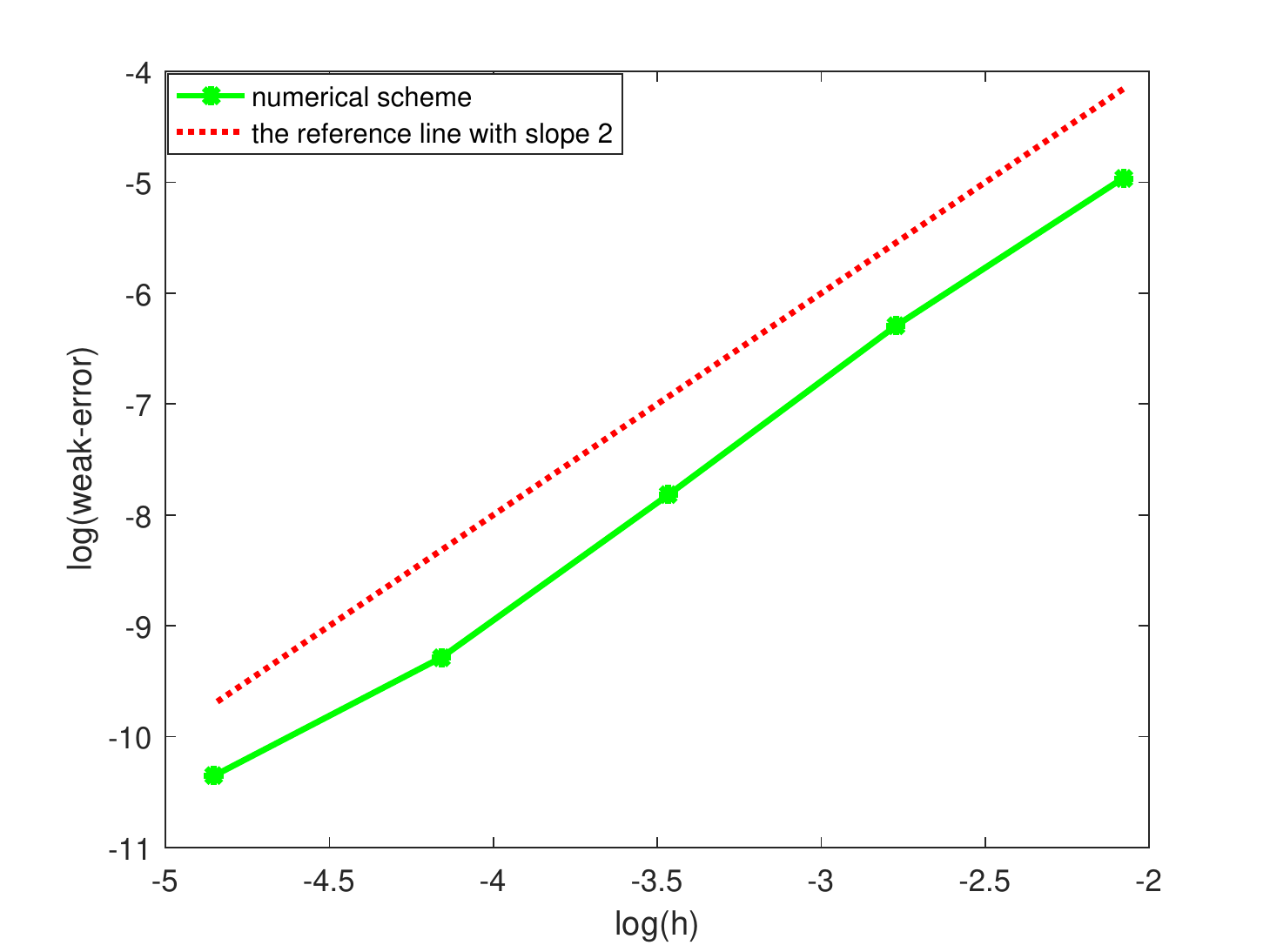}
\end{minipage}}
\subfigure[$\psi(p,q)=\exp(-\frac{p^2}2-\frac{q^2}2)$]{
\begin{minipage}{0.31\linewidth}
\includegraphics[width=4cm,height=4cm]{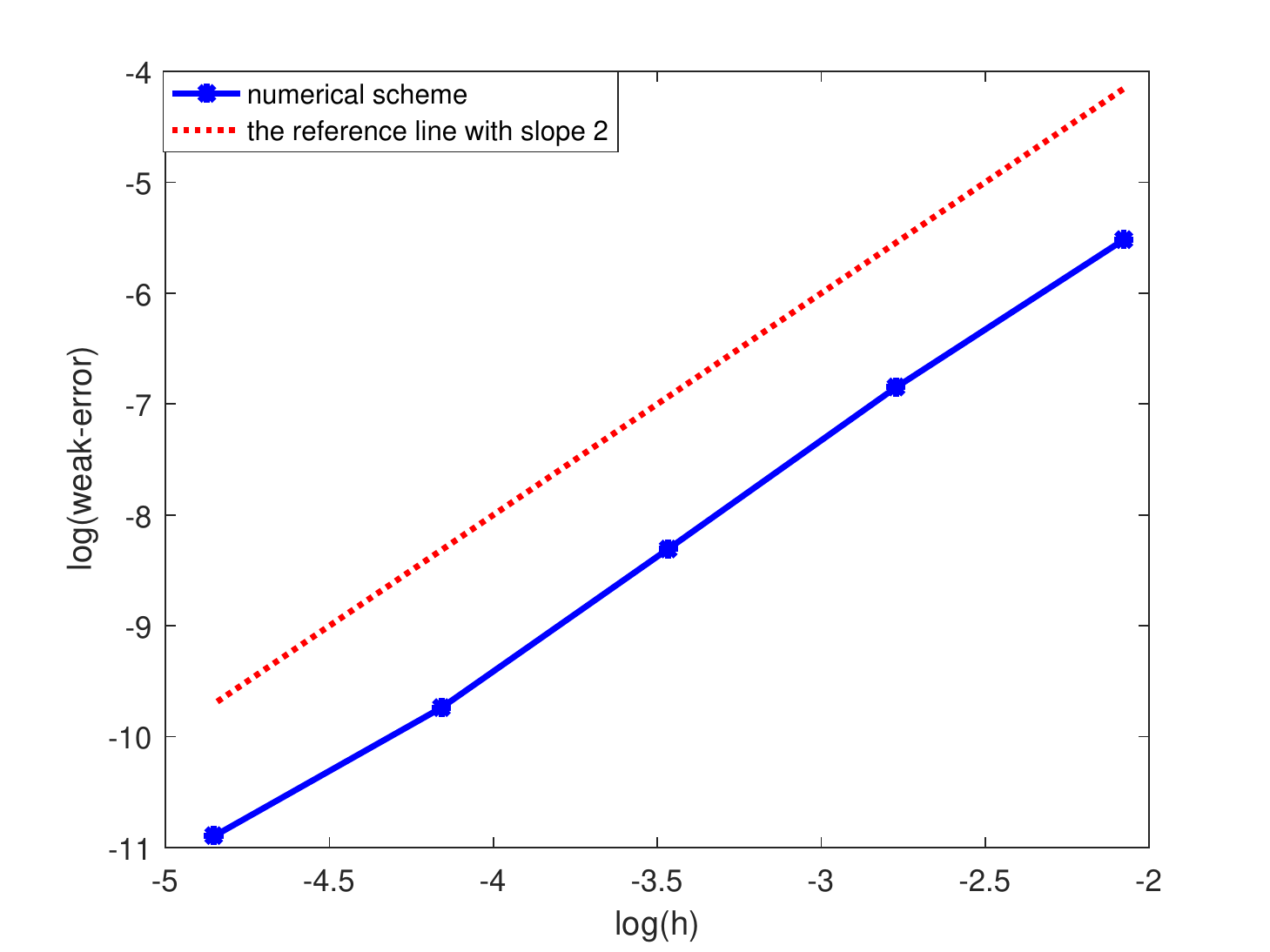}
\end{minipage}}
\subfigure[$\psi(p,q)=\sin(p^2+q^2)$]{
\begin{minipage}{0.31\linewidth}
\includegraphics[width=4cm,height=4cm]{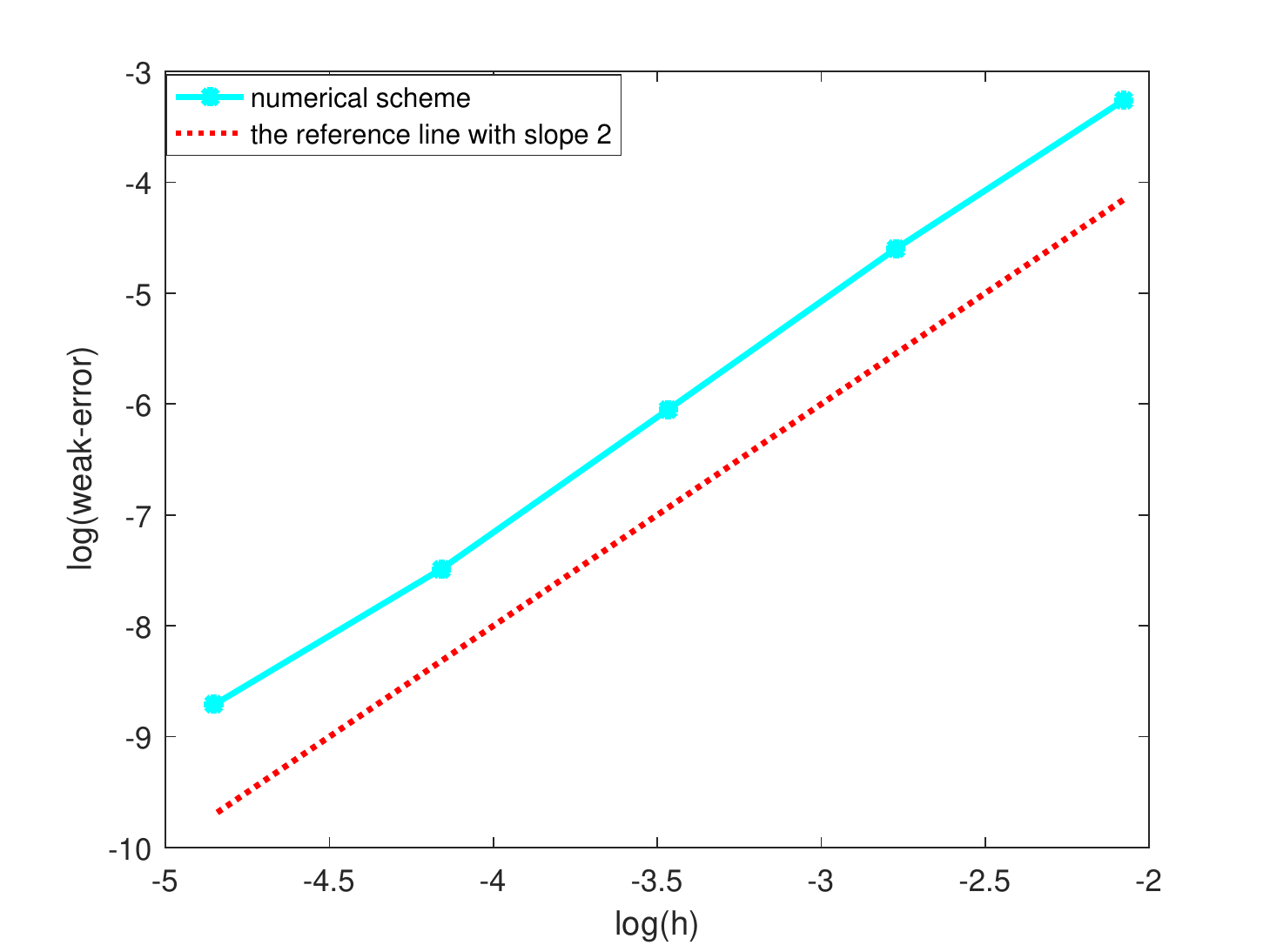}
\end{minipage}}
\caption{Rate of convergence in weak sense $(a=1,$ $v=2$ and $\sigma=0.5)$.}\label{pp1}
\end{figure}

Fig.\;\ref{pp1} plots the value $\ln|\mathbf{E}\psi (P(T),Q(T))-\mathbf{E}\psi(P_{N},Q_{N})|$ against $\ln h$ for five different step sizes $h=[2^{-3},2^{-4},2^{-5},2^{-6},2^{-7}]$ at $T=1$, where $(P(T),Q(T))$ and $(P_{N},Q_{N})$ represent the exact and numerical solutions at time $T$, respectively.  It can be seen that the weak order of (\ref{5.5}) is 2, which is indicated by the reference line of slope 2. 

\begin{figure}[H]
\centering
\subfigure[$\psi(p,q)=\cos(p+q)$]{
\begin{minipage}{0.31\linewidth}
\includegraphics[width=4cm,height=4cm]{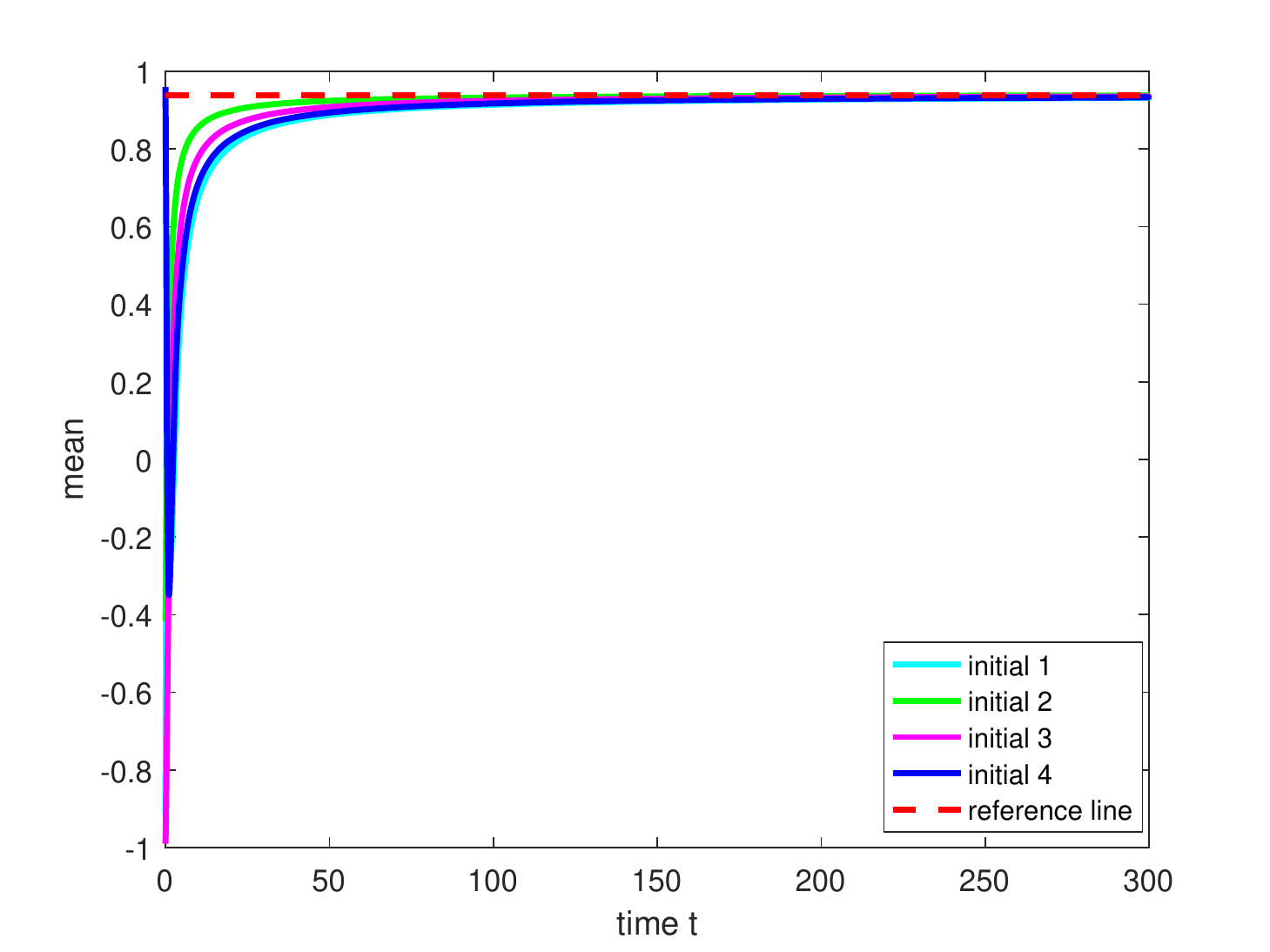}
\end{minipage}}
\subfigure[$\psi(p,q)=\exp(-\frac{p^2}2-\frac{q^2}2)$]{
\begin{minipage}{0.31\linewidth}
\includegraphics[width=4cm,height=4cm]{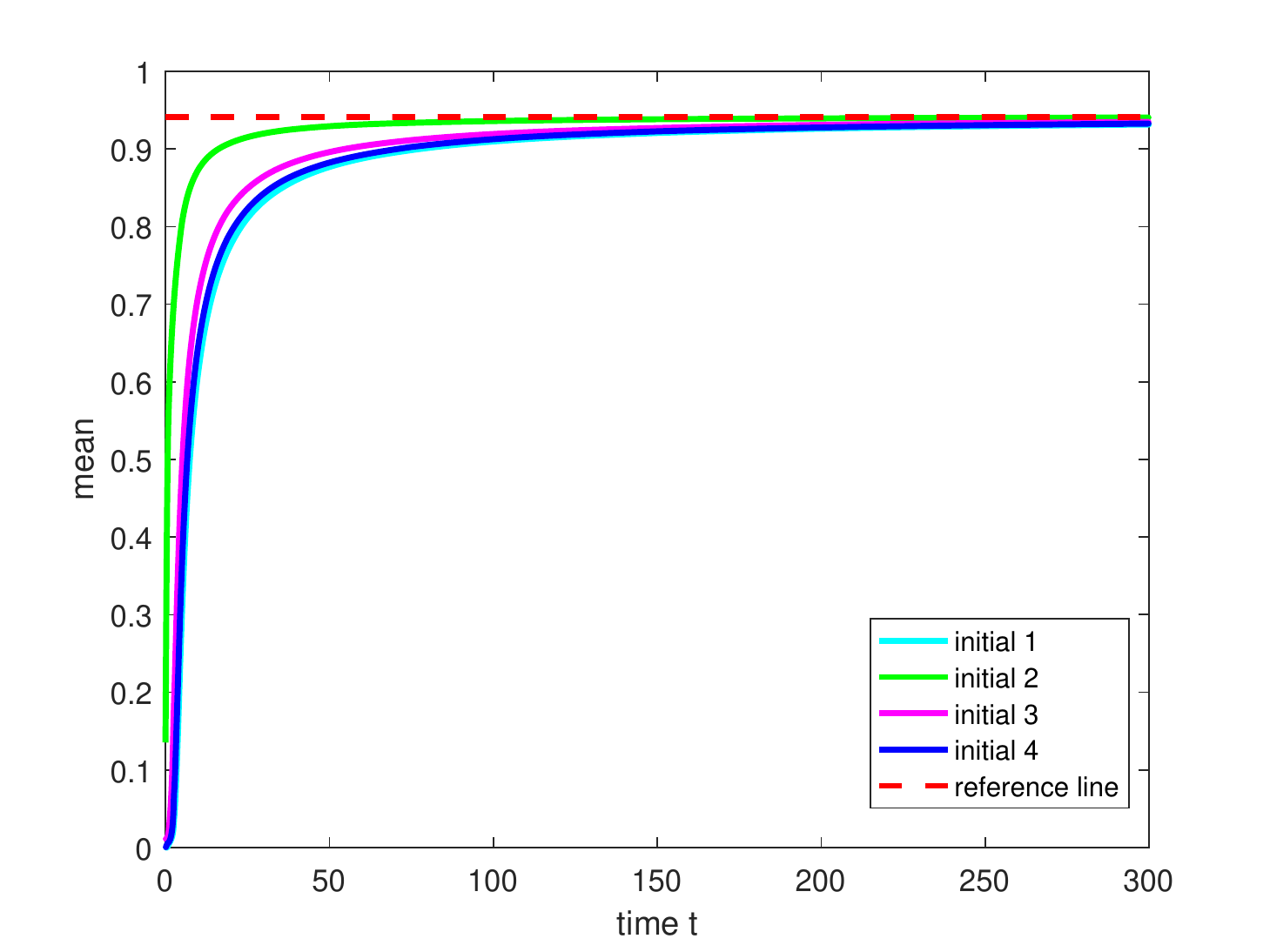}
\end{minipage}}
\subfigure[$\psi(p,q)=\sin(p^2+q^2)$]{
\begin{minipage}{0.31\linewidth}
\includegraphics[width=4cm,height=4cm]{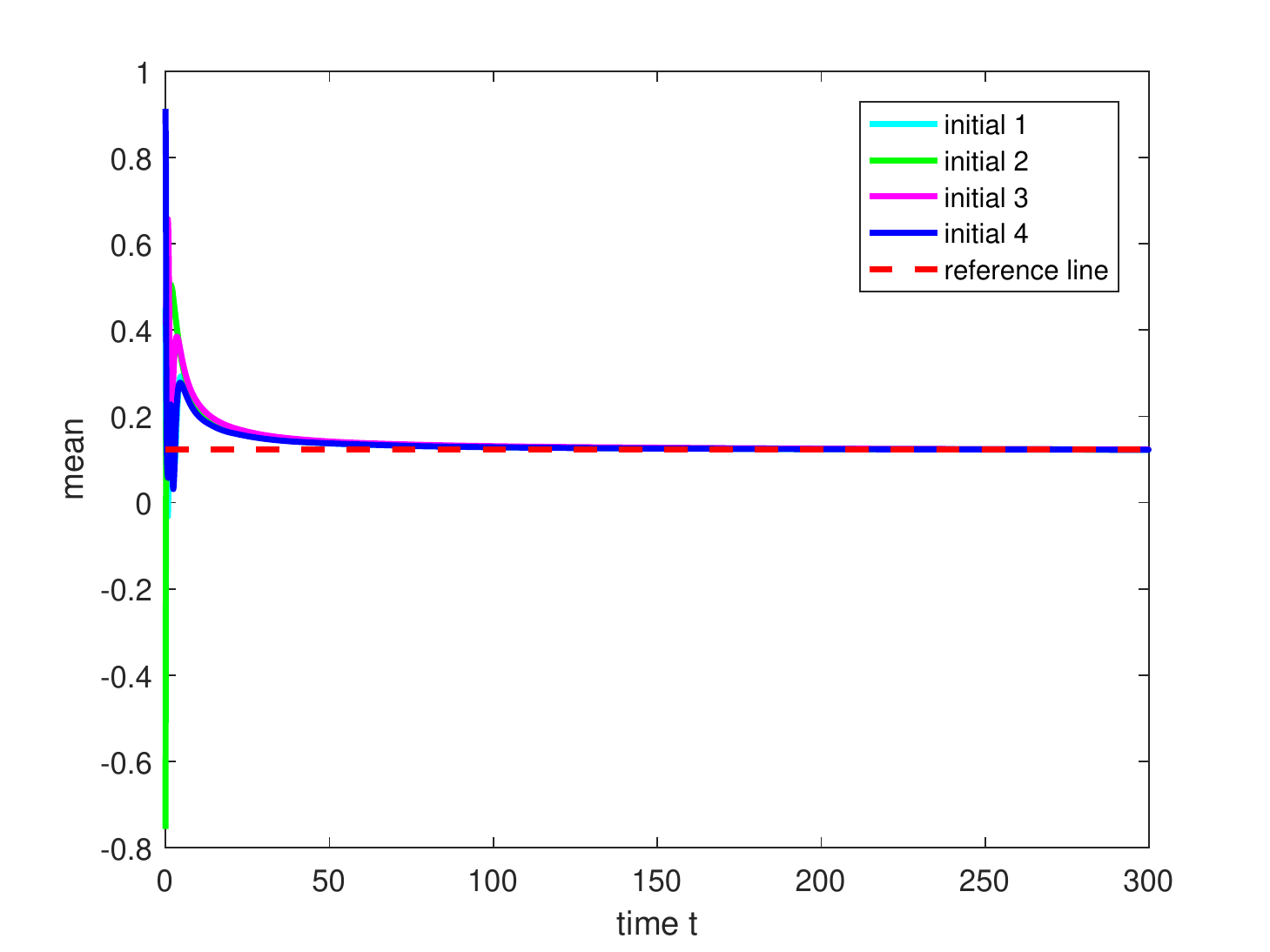}
\end{minipage}}
\caption{The temporal averages $\frac{1}{N}\sum_{i=1}^N\mathbf{E}\psi(P_i,Q_i)$ starting from different initial values $(a=1,$ $v=2,$ $\sigma=0.5$ and $T=300)$.}\label{p2}
\end{figure}

To verify that the temporal averages starting from different initial values will converge to the spatial average, i.e., the ergodic limit $$\int_{\mathbb{R}^2}\psi(p,q)d\mu_1=\int_{\mathbb{R}^2}\psi(p,q)\rho_1(p,q)dpdq,$$ we introduce the reference value for a specific test function $\psi$ to represent the ergodic limit: since the function $\psi$ is uniformly bounded and the density function $\rho_1$ dissipates exponentially, the integrator is almost zero when $p^2+q^2$ is sufficiently large. Thus, we choose $\int_{-10}^{10}\int_{-10}^{10}\psi(p,q)\rho_1(p,q)dpdq$ as the reference value, which appears as the dashed line in Fig.\;\ref{p2}. We can tell from Fig.\;\ref{p2} that the temporal averages $\frac{1}{N}\sum_{i=1}^N\mathbf{E}\psi(P_i,Q_i)$  of the proposed scheme starting from four different initial values initial$(1)=(-10,1)^\top,$ initial$(2)=(2,0)^\top,$
initial$(3)=(0,3)^\top$ and initial$(4)=(4,2)^\top$
converge to the reference line with error no more than $h^2+\frac1T$, which coincides with Theorem \ref{erlimit}.

\subsection{A nonlinear oscillator with linear damping} In this section, we consider the following equation
\label{sub5.2}
\begin{equation}\label{NSLE6.1}
\begin{split}
&dP=-(4Q^3-4Q-\frac12)dt-vPdt+\sqrt{2\beta^{-1}v}dW(t),\quad P(0)=p,\\
&dQ=Pdt,\quad Q(0)=q,
\end{split}
\end{equation}
where $v,$ $\beta>0$ are fixed constants and $W(t)$ denotes a one-dimensional standard Wiener process. 
Similar to \eqref{5.1}, \cite{MST10} shows that the dynamics generated by \eqref{NSLE6.1} is ergodic with the invariant measure $\mu_2,$ which can be characterized by the Boltzmann-Gibbs density
\begin{align*}
\rho_2(p,q)=\Theta\exp\bigg{(}-\beta\bigg{(}\frac12p^2+(1-q^2)^2-\frac{1}{2}q\bigg{)}\bigg{)}
\end{align*}
with the renormalization constant  $\Theta={\left(\int_{\mathbb{R}^2}e^{-\beta(\frac12p^2+(1-q^2)^2-\frac{1}{2}q)}dpdq\right)}^{-1}.$ Based on \eqref{3.8.11}, we get the associated numerical scheme
\begin{equation}\label{NSLENS}
\begin{split}
P_{n+1}=&e^{-vh}P_{n}-\frac{h^2}2P_{n+1}\big{(}12Q_{n}^2-4\big{)}-he^{-vh}\Big{(}1+\frac{vh}2\Big{)}\bigg{(}4Q_{n}^3-4Q_n-\frac{1}{2}\bigg{)}\\
&+e^{-vh}\Big{(}1+\frac{vh}2\Big{)}\sqrt{2\beta^{-1}v}\Delta_{n+1}W,\\
Q_{n+1}=&Q_{n}+he^{vh}\Big{(}1-\frac{vh}2\Big{)}P_{n+1}+\frac{h^2}2\bigg{(}4Q_{n}^3-4Q_n-\frac{1}{2}\bigg{)}-\frac{h}2\sqrt{2\beta^{-1}v}\Delta_{n+1}W.
\end{split}
\end{equation}
Although \eqref{NSLE6.1} does not satisfy the linear assumption in Theorem \ref{ergodic} and the Lipschitz assumption in Theorem \ref{th4.2}, we investigate its ergodicity and weak convergence order in the view of numerical tests.

Let $v=4,$ $\beta=2,$ and test functions $\psi$ be the same as those in Section \ref{sub5.1}.
The value $\ln|\mathbf{E}\psi (P(T),Q(T))-\mathbf{E}\psi(P_{N},Q_{N})|$ against $\ln h$ for five different step sizes $h=[2^{-3},2^{-4},2^{-5},2^{-6},2^{-7}]$ at $T=1$ is shown in Fig.\;\ref{pic3}, similar to Fig.\;\ref{pp1}. Compared with the reference lines of slope 2 in Fig.\;\ref{pic3}, it can be seen that (\ref{NSLENS}) has order $2$ in the sense of weak approximations.

\begin{figure}[H]
\centering
\subfigure[$\psi(p,q)=\cos(p+q)$]{
\begin{minipage}{0.31\linewidth}
\includegraphics[width=4cm,height=4cm]{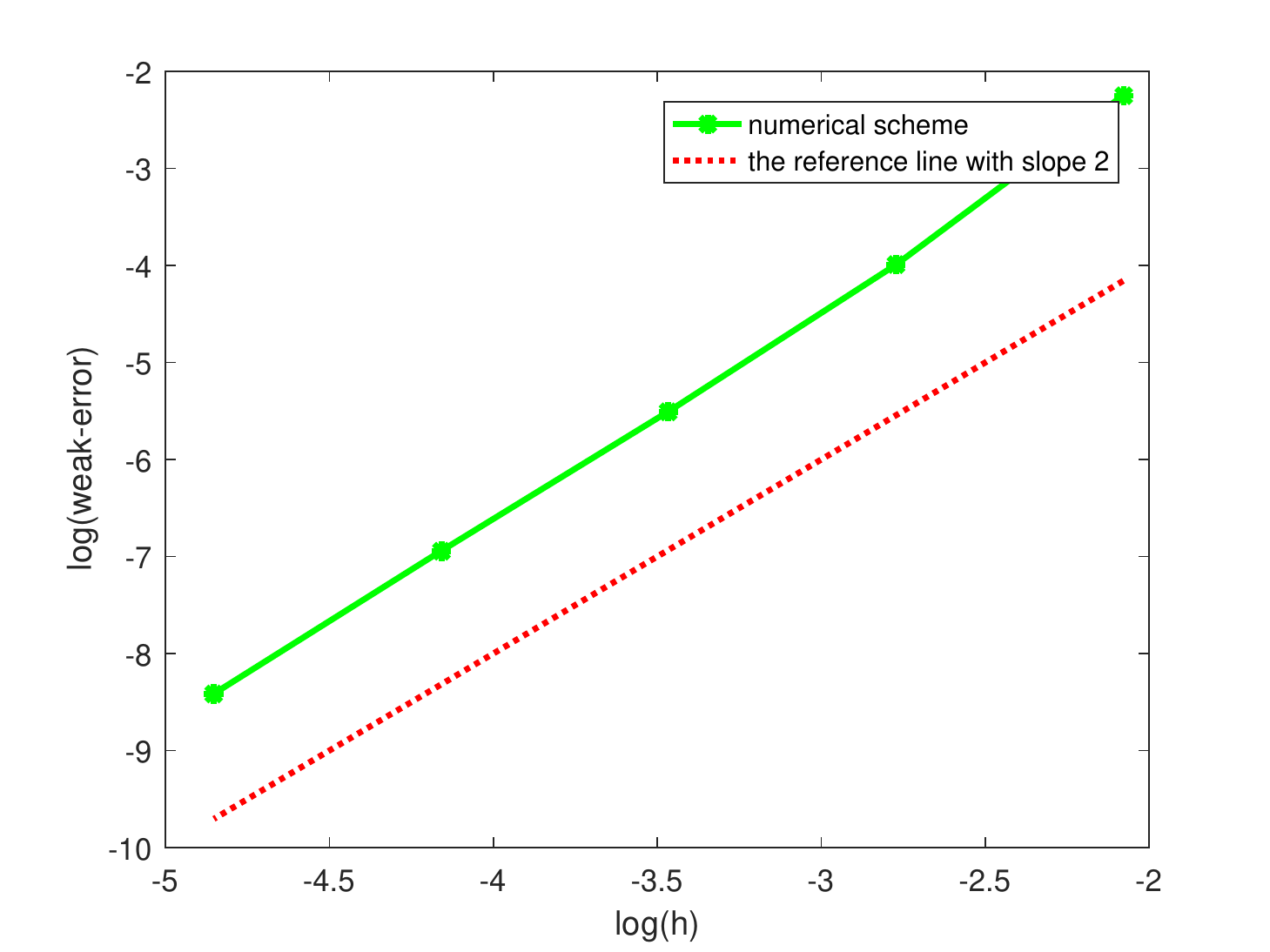}
\end{minipage}}
\subfigure[$\psi(p,q)=\exp(-\frac{p^2}2-\frac{q^2}2)$]{
\begin{minipage}{0.31\linewidth}
\includegraphics[width=4cm,height=4cm]{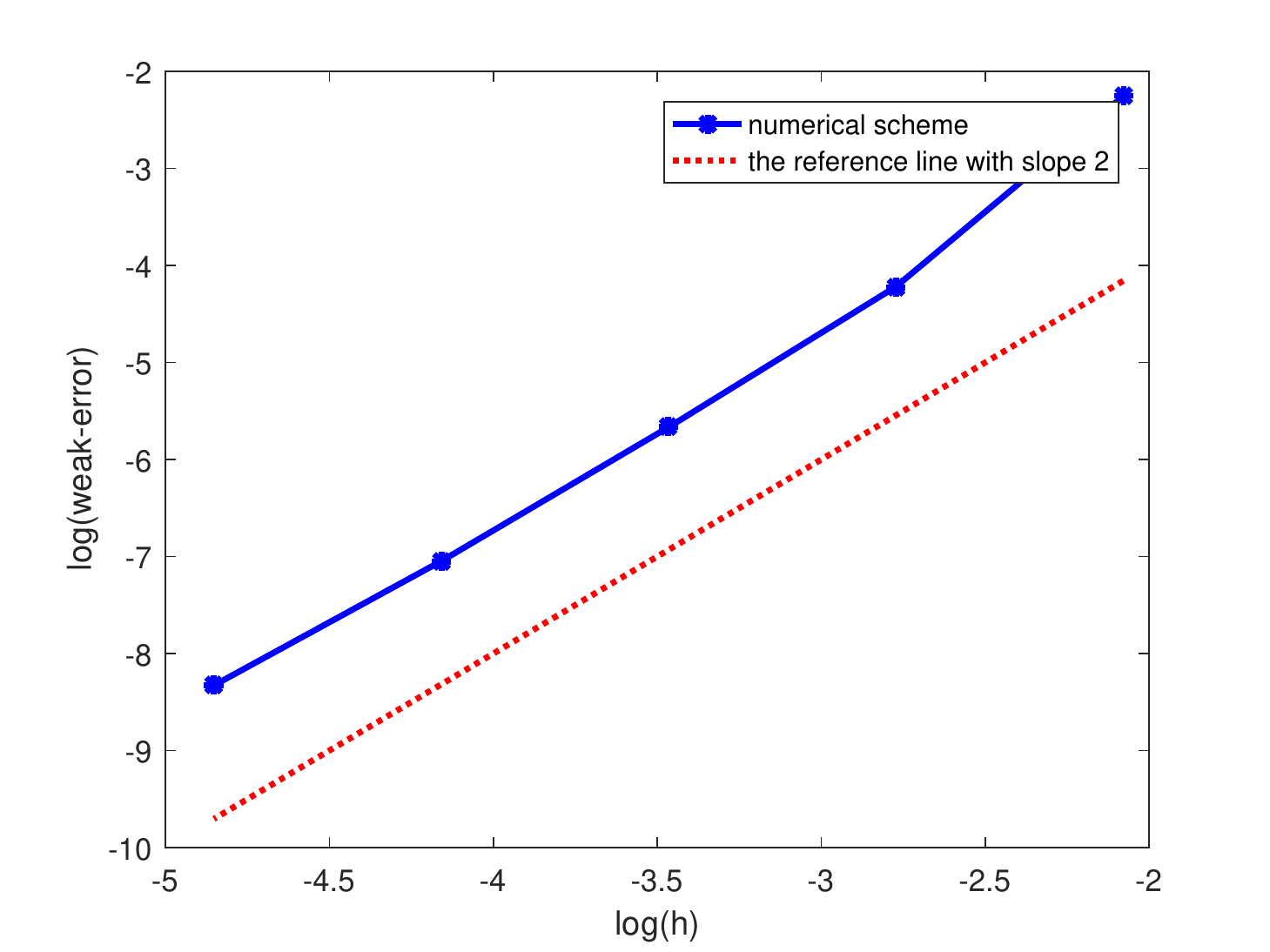}
\end{minipage}}
\subfigure[$\psi(p,q)=\sin(p^2+q^2)$]{
\begin{minipage}{0.31\linewidth}
\includegraphics[width=4cm,height=4cm]{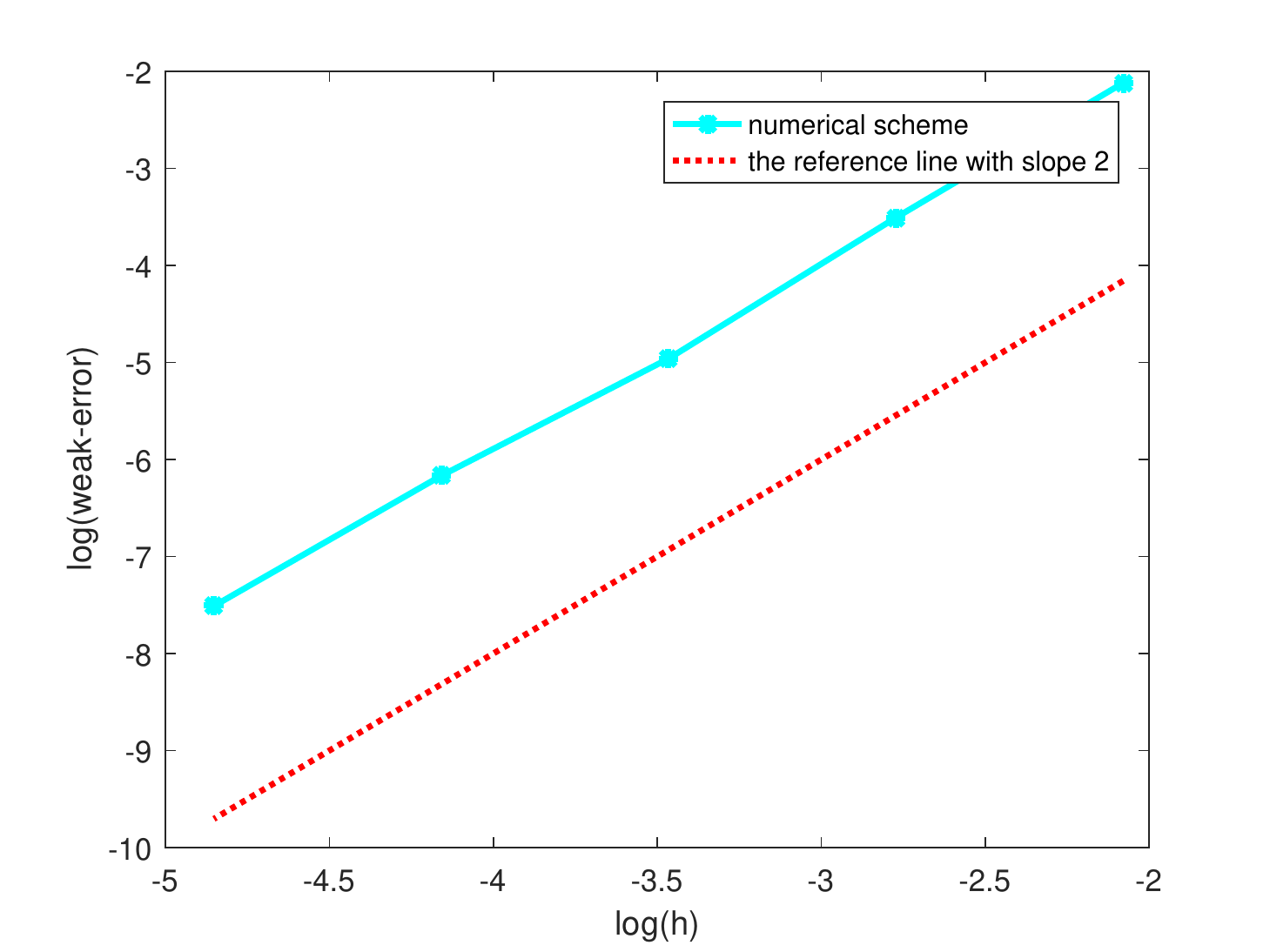}
\end{minipage}}
\caption{Rate of convergence in weak sense ($p=-2$ and $q=-2)$
.}\label{pic3}
\end{figure}

\begin{figure}[H]
\centering
\subfigure[$\psi(p,q)=\cos(p+q)$]{
\begin{minipage}{0.31\linewidth}
\includegraphics[width=4cm,height=4cm]{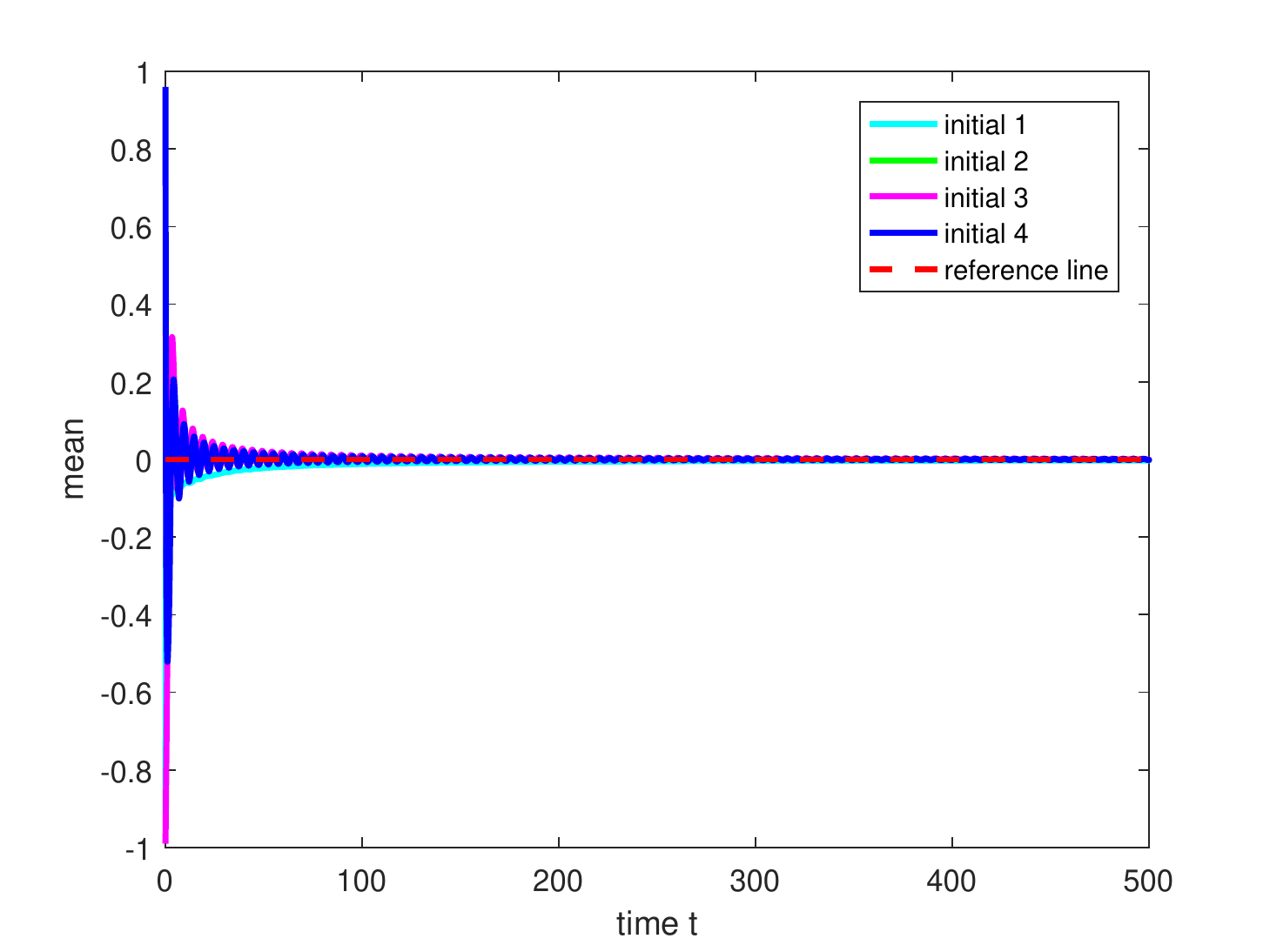}
\end{minipage}}
\subfigure[$\psi(p,q)=\exp(-\frac{p^2}2-\frac{q^2}2)$]{
\begin{minipage}{0.31\linewidth}
\includegraphics[width=4cm,height=4cm]{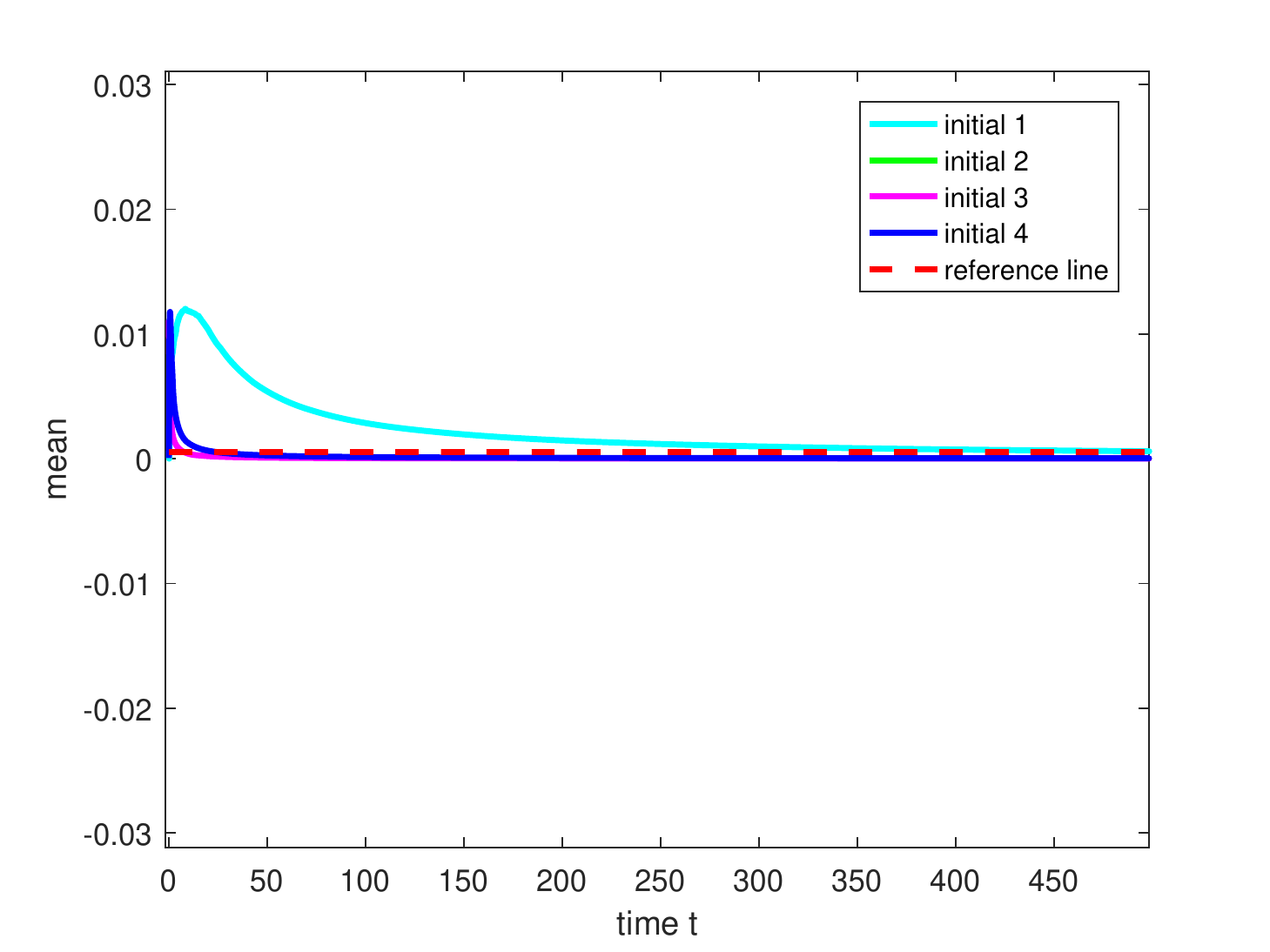}
\end{minipage}}
\subfigure[$\psi(p,q)=\sin(p^2+q^2)$]{
\begin{minipage}{0.31\linewidth}
\includegraphics[width=4cm,height=4cm]{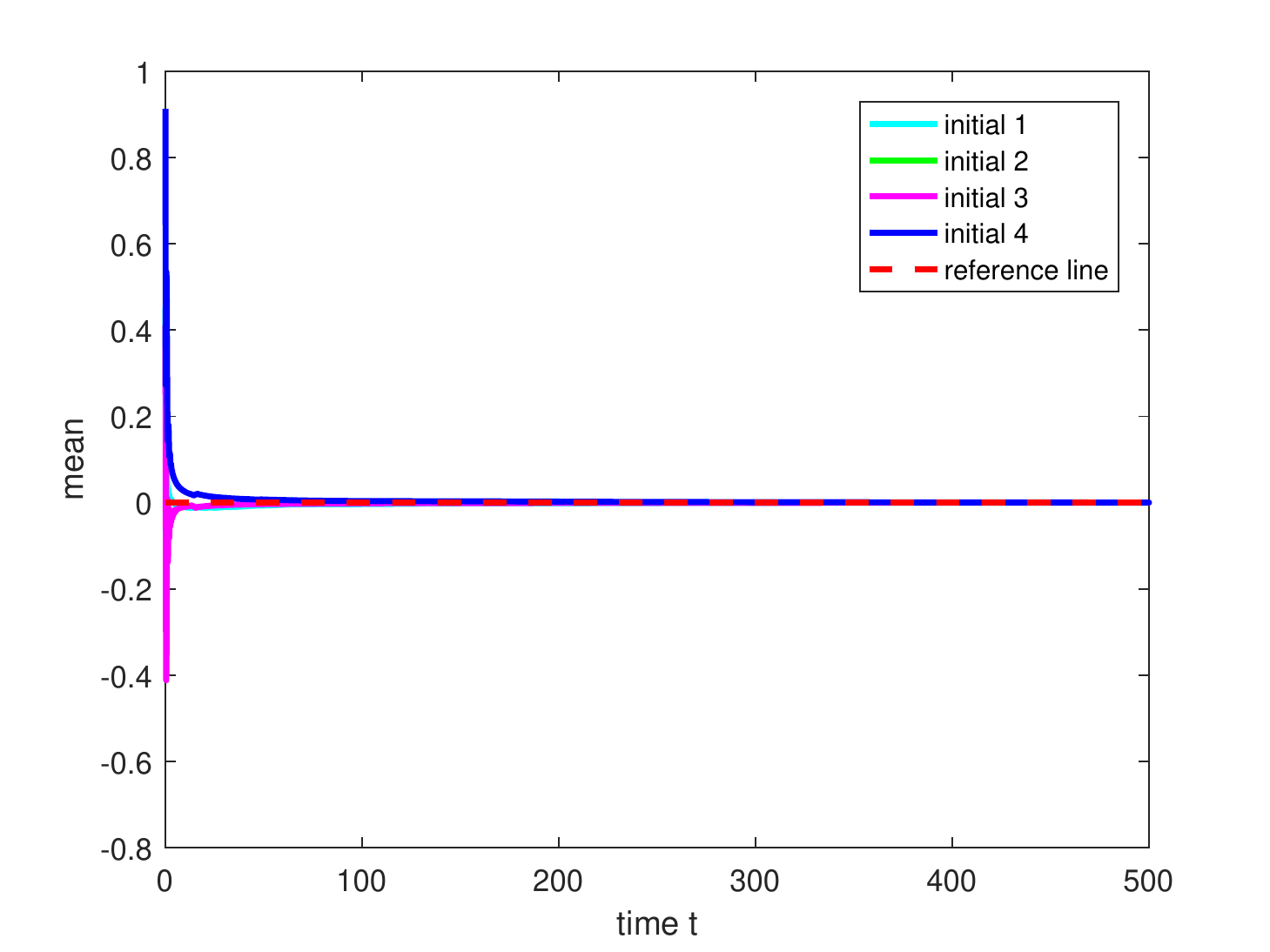}
\end{minipage}}
\caption{The temporal averages $\frac{1}{N}\sum_{i=1}^N\mathbf{E}\psi(P_i,Q_i)$ starting from different initial values with $T=500$
.}\label{pic4}
\end{figure}

Fig.\;\ref{pic4} shows the temporal averages $\frac{1}{N}\sum_{i=1}^N\mathbf{E}\psi(P_i,Q_i)$ of \eqref{NSLENS} starting from different initial values initial$(1)=(-10,1)^\top,$ initial$(2)=(2,0)^\top,$
initial$(3)=(0,3)^\top$ and initial$(4)=(4,2)^\top.$ We also use $\int_{-10}^{10}\int_{-10}^{10}\psi(p,q)\rho_2(p,q)dpdq$ as an approximation of the reference value, i.e., the ergodic limit $$\int_{\mathbb{R}^2}\psi(p,q)d\mu=\int_{\mathbb{R}^2}\psi(p,q)\rho_2(p,q)dpdq.$$ 
Fig.\;\ref{pic4} indicates that the proposed scheme is ergodic from the view of numerical tests.

\section{Conclusion}In this paper, an approach for constructing high weak order conformal symplectic schemes for stochastic Langevin equations is developed motivated by the ideas in \cite{Abdulle,WHS16,Anton1,MilTre2003}.
The key points are: the generating function is applied to ensure that the proposed scheme preserves the geometric structure, while the modified technique is used to reduce the simulation of multiple integrations. We show that, for the case $k=k'=1,$ the proposed scheme could inherit both the conformal symplectic geometric structure (under Lipschitz assumption) and the ergodicity (under linear assumption) of the stochastic Langevin equation. Numerical experiments verify our theoretical results. In addition, the numerical tests of an oscillator with non-global Lipschitz coefficients indicate that the proposed scheme could also inherit the internal properties of the original system, which implies that our results could possibly extend to the non-global Lipschitz case. The theoretical analysis of this extension is also ongoing.

\bibliography{references}
\bibliographystyle{plain}

\end{document}